\newtheorem{theorem}{Theorem}[section]
\newtheorem{lemma}[theorem]{Lemma}
\newtheorem {question}[theorem]{Question}
\newtheorem{proposition}[theorem]{Proposition}
\newtheorem{corollary}[theorem]{Corollary}
\theoremstyle{definition}
\newtheorem{example}[theorem]{Example}
\newtheorem{remark}[theorem]{Remark}
\numberwithin{equation}{section}
\begin{document}

\setcounter{page}{1}

\title[Shift invariant subspaces of composition operators]{Composition operators between Beurling subspaces of Hardy space}
\author[V. A. Anjali, P. Muthukumar \MakeLowercase{and} P. Shankar]{V. A. Anjali, P. Muthukumar \MakeLowercase{and} P. Shankar}

%Composition operators between shift invariant subspaces of Hardy space

\address{V. A. Anjali, Department of Mathematics, Cochin University of Science And Technology,
 Ernakulam, Kerala- 682022, India. }
\email{\textcolor[rgb]{0.00,0.00,0.84}{anjaliva6446@gmail.com}}

\address{P. Muthukumar, Department of Mathematics and Statistics, Indian Institute of Technology,
 Kanpur- 208016, India. }
\email{\textcolor[rgb]{0.00,0.00,0.84}{pmuthumaths@gmail.com, muthu@iitk.ac.in}}

\address{P. Shankar, Department of Mathematics, Cochin University of Science And Technology,
 Ernakulam, Kerala- 682022, India.}
\email{\textcolor[rgb]{0.00,0.00,0.84}{shankarsupy@gmail.com, shankarsupy@cusat.ac.in}}

\subjclass[2020]{Primary 47B33; Secondary 47A15, 47B38, 30H10, 46E15, 46E22.}

\keywords{Composition operators, invariant subspaces, inner functions, Blaschke products,
singular inner functions, Hardy spaces}

\date{\today
%\newline\indent $^{*}$Corresponding author
}

\begin{abstract}
V. Matache (J. Operator Theory 73(1):243--264, 2015) raised an open problem about characterizing composition 
operators $C_{\phi}$ on the Hardy space $H^2$ and nonzero singular measures $\mu_1$, $\mu_2$ on the unit
 circle such that
	$C_{\phi}({S_{\mu_1}} H^2)\subseteq {S_{\mu_2}} H^2,$ where $S_{\mu_i}$ denotes the singular inner function
 corresponding to the measure $\mu_i,i=1,2$. In this article, we consider this problem in a more general setting.
We characterize holomorphic self maps $\phi$  of the unit disk $\mathbb{D}$ and inner functions
 $\theta_1, \theta_2$ such that
$C_{\phi}(\theta_1  H^p)\subseteq \theta_2 H^p,$ for $p>0$. Emphasis is given to Blaschke products
 and singular inner functions as a special case. 
 We also give an another measure-theoretic
characterization to above question  when $\phi$ is an elliptic automorphism.
 For a given Blaschke product $\theta$, we discuss about finding all
 self maps $\phi$ such that $\theta H^p$ is invariant under $C_\phi$.

\end{abstract}
\maketitle

\section{Introduction}
 Let $\mathbb{D}$ be the open unit disk in the complex plane and $\phi$ be any holomorphic self
 map on $\mathbb{D}$.
 For any linear space $\mathcal{V}$ of holomorphic functions on $\mathbb{D}$, the composition
  operator $C_{\phi}$, is defined as
$$C_{\phi}(f)=f \circ \phi,\text{ for all } f\in \mathcal{V}.$$ The study of composition operators
 has vastly done
in the classical analytic function spaces like Hardy spaces, Bergman spaces, Dirichlet spaces and
so on, in the context
 of boundedness, compactness and
various other operator theoretic properties. See \cite{cowenbook} and references therein for more details.
It is worth noting that $C_\phi$ maps every Hardy space $H^p$ into itself for any holomorphic self map $\phi$.

By an invariant subspace of an operator $T$, we mean a closed linear subspace which is invariant under $T$.
% The celebrated
  Beurling \cite{beurling} identified all the invariant subspaces of multiplication
 operator $M_z$ (also, commonly known as shift operator) on $H^2$ induced by the coordinate function.
 He proved that $\{\theta H^2: \theta \mbox{~is inner}\}$ is the set of all
nontrivial  invariant subspace of $M_z$ on $H^2$.
On account of the above result, for $p>0$ and an inner function $\theta$, we call $\theta H^p$  as
\textit{Beurling subspace}.

The renowned ``Invariant Subspace Problem (ISP)", apparently arose after Beurling's work, which addresses
 the question
of whether every bounded linear operator on an infinite dimensional separable Hilbert space possesses
 a nontrivial invariant subspace. In \cite{COMP}, Nordgren et al., gave an equivalent problem for
  ISP in terms of
  composition operators induced by the hyperbolic automorphism on $H^2$. Recently, Carmo and Noor \cite{noor}
  reformulated the ISP in terms of composition operators induced by hyperbolic maps on $\mathbb{D}$.
   This increased
   the interest in understanding the invariant subspaces of composition operators on  $H^p$ spaces.
     In particular,
    the study of Beurling subspaces invariant under composition operators has become an important and
    interesting
    topic of research in operator theory.
	 	
 Mahvidi \cite{mahvidi} considered the common invariant subspaces of two composition operators and the
  lattice containment
  for two composition operators. All invariant subspaces of composition operators on $H^2$
  induced by parabolic non-automorphism were determined in \cite{parabolic}.
   Chalender and Partington \cite{Chalendar} initiated the study of
   Beurling subspaces which are
  invariant under composition operators. Jones \cite{jones} investigated invariant Beurling subspaces  of
  composition operator $C_\phi$ when $\phi$ is an inner function.  Cowen and Wahl \cite{cowen}
   proved that if $\phi$ has the Denjoy-Wolff point $a$ on the unit circle,
   then the atomic inner function
subspaces with a single atom at $a$ are invariant subspaces for the composition operator $C_\phi$.
 Matache \cite{valentine} proved that every composition operator on $H^2$ has
   a nontrivial invariant Beurling  subspace. Bose, Muthukumar and Sarkar \cite{buerlingtype} unified
    the observations from
    \cite{cowen, jones, valentine} and identified a characterization for Beurling subspace $\theta H^2$ to
    be invariant
     under $C_\phi$ in terms of $\theta $ and $\phi$. In a subsequent paper, Muthukumar and
     Sarkar \cite{model}
     explored  model spaces that are invariant under composition operator $C_\phi$ on $H^2$.

In \cite[Problem 1]{valentine}, Matache raised the following question. In the same paper, this question is
 answered when both the measures  $\mu_1$ and  $\mu_2$  are purely atomic.
\begin{question}\label{qn}
	For any holomorphic self map $\phi$ of $\mathbb{D}$ and nonzero singular measures $\mu_1$, $\mu_2$ on
 the unit circle,
what characterization can be given for
	$$C_{\phi}({S_{\mu_1}} H^2)\subseteq{S_{\mu_2}} H^2? $$
\end{question}

In this article, we consider the above problem in a more general setting. In Section 3,
we characterize holomorphic self maps $\phi$  of the unit disk $\mathbb{D}$ and inner functions
 $\theta_1, \theta_2$ such that
$C_{\phi}(\theta_1  H^p)\subseteq \theta_2 H^p,$ for $p>0$. Along with
several interesting consequences of this characterization, we also determine all the
inner functions $\theta$ such that $C_{\phi}(\theta  H^p)\subseteq BH^p$  for a given arbitrary
 Blaschke product $B$.

In Section 4, we restrict our attention to Beurling subspaces induced by
singular inner functions. In this case, we also give another measure-theoretic
characterization as an answer to the Question \ref{qn}  when $\phi$ is an elliptic automorphism.

In Section 5, for a given inner function $\theta$, we attempt to collect
%the set $\mathcal{L}_{\theta}$ of
all maps $\phi$ such that $\theta H^p$ invariant under $C_\phi$.  In particular, we focus on the case
when the inner function $\theta$ is a Blaschke product $B$. Derivatives of $\phi$ at the zeros of $B$
plays a very crucial role to know whether $C_{\phi}(BH^p)\subseteq BH^p.$ Various special cases are
considered to understand
the results more deeply.
 Also, we find a class of Beurling subspaces which are not invariant under any composition operator
  induced by a nontrivial automorphism.

\section{Preliminaries}
In this section, we present some notations and the necessary background for what follows.
 Let $\mathbb{N}$ denote
the set of all natural numbers. We denote the open unit disk and unit circle in the complex
plane as $\mathbb{D}$ and $\mathbb{T}$,
 respectively.
For $0<p<\infty$, the Hardy space $H^p(\mathbb{D})$ or simply $H^p$ is defined as the set of
 all holomorphic functions $f$ on $\mathbb{D}$ such that 
$$
 \Vert f\Vert_{p}=\sup\limits_{0\leq r<1}\Big(\dfrac{1}{2\pi}\int\limits_{0}^{2\pi}
|f(re^{i\theta})|^pd\theta\Big)^{\frac{1}{p}}
$$ 
is finite.
$H^{\infty}$ denotes the algebra of all bounded holomorphic functions $f$ on $\mathbb{D}$
with supremum norm.
The closed unit ball of $H^{\infty}$  is denoted as $\mathcal{B}_1$.  The collection of all holomorphic
 self maps on $\mathbb{D}$ is denoted by
$\mathcal{S}$. As a consequence of the Schwarz lemma, it is well known that if $\phi\in \mathcal{S}$ has
more than two fixed points in $\mathbb{D}$ then $\phi$ must be the identity map. This fact will be used
many times in the article. The reader can refer \cite{duren,texthardy} for an introduction to the theory
 of Hardy spaces. It is trivial to see that if $\psi\in H^{\infty}$, then
$\psi f\in H^p$ for all $f\in H^p$.

By Fatou's theorem {\cite[Theorem 2.2]{duren}} for $f\in H^p$ $(0<p\leq\infty)$, the \textit{radial limit}
$$
\tilde{f}(e^{it})=\lim_{r \rightarrow1^-}f(re^{it})
$$ 
exist almost everywhere (a.e.) on  $\mathbb{T}$
 and $\tilde{f}\in L^p(\mathbb{T})$ (with respect to Lebesgue measure on $\mathbb{T}$) with
  $\Vert f\Vert_{p}=\Vert \tilde{f}\Vert_{L^p(\mathbb{T})}$.  It is trivial to see that for any
   $\phi\in\mathcal{S}$, we have $\vert\tilde{\phi}{(e^{it})}\vert\leq1$ a.e. on $\mathbb{T}.$
   As a partial converse of Fatou's theorem, we have the following result.
\begin{theorem}\cite[Theorem 2.11]{duren}\label{bdd}
	Let $f\in H^p$ for some $p>0$. If $\tilde{f}\in L^{\infty}(\mathbb{T}) $, then $f\in H^{\infty}$
 with $\Vert f\Vert_{H^\infty}=\Vert \tilde{f}\Vert_{L^{\infty}(\mathbb{T})}$.
\end{theorem}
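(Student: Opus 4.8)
The plan is to reduce everything to the single estimate $\sup_{z\in\mathbb D}|f(z)|\le M$, where $M:=\Vert\tilde f\Vert_{L^\infty(\mathbb T)}$. Once that is in hand, $f\in H^\infty$ with $\Vert f\Vert_{H^\infty}=\sup_{\mathbb D}|f|\le M$; conversely, since $|f(re^{it})|\le\sup_{\mathbb D}|f|$ for every $r<1$, letting $r\to1^-$ gives $|\tilde f(e^{it})|\le\sup_{\mathbb D}|f|$ a.e.\ on $\mathbb T$, so $M=\Vert\tilde f\Vert_{L^\infty}\le\sup_{\mathbb D}|f|\le M$, forcing $\Vert f\Vert_{H^\infty}=\sup_{\mathbb D}|f|=M$. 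We may assume $f\not\equiv0$.

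To obtain the bound I would invoke the canonical inner--outer factorization $f=I\cdot O$, valid for every nonzero $f\in H^p$ with $p>0$: here $I$ is inner, so $|I|\le1$ on $\mathbb D$, and the outer factor is
$$O(z)=c\,\exp\!\left(\frac{1}{2\pi}\int_0^{2\pi}\frac{e^{it}+z}{e^{it}-z}\,\log|\tilde f(e^{it})|\,dt\right),\qquad|c|=1,$$
the integral being finite because $\log|\tilde f|\in L^1(\mathbb T)$ for a nonzero $H^p$ function. Taking real parts shows that $\log|O(z)|$ is exactly the Poisson integral of $\log|\tilde f|$ at $z$; since $\log|\tilde f|\le\log M$ a.e.\ and the Poisson kernel is positive with total mass one, we get $\log|O(z)|\le\log M$, i.e.\ $|O(z)|\le M$. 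Hence $|f(z)|=|I(z)|\,|O(z)|\le|O(z)|\le M$ on $\mathbb D$, which is the required estimate.

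It is worth noting that for $p\ge1$ one can bypass factorization: in that range every $f\in H^p$ equals the Poisson integral of $\tilde f$, so immediately $|f(re^{i\theta})|\le\frac{1}{2\pi}\int_0^{2\pi}P_r(\theta-t)\,|\tilde f(e^{it})|\,dt\le M$; a uniform alternative for all $p>0$ is to use that the least harmonic majorant of $|f|^p$ is the Poisson integral of $|\tilde f|^p$, giving $|f(z)|^p\le M^p$ at once. The only genuinely substantial point is the $0<p<1$ regime, where $f$ need not be recovered from $\tilde f$ by the Poisson formula and one must lean on the factorization theorem (or the sharp harmonic-majorant description of $H^p$) from the standard Hardy-space theory; with the estimate $\sup_{\mathbb D}|f|\le M$ established, the reverse inequality via radial limits described above closes the argument and yields $\Vert f\Vert_{H^\infty}=\Vert\tilde f\Vert_{L^\infty(\mathbb T)}$.
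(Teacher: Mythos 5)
This is a quoted background result (Theorem 2.11 of Duren's book), for which the paper supplies no proof of its own, so the only meaningful comparison is with the standard source. Your argument --- bound the outer factor by $M$ via the Poisson integral of $\log|\tilde f|$, use $|I|\le 1$ for the inner factor, and recover $\Vert\tilde f\Vert_{L^\infty}\le\sup_{\mathbb D}|f|$ from radial limits --- is correct and is essentially the canonical-factorization proof Duren gives (there for the more general implication $\tilde f\in L^q$, $q>p$, $\Rightarrow f\in H^q$), so nothing further is needed.
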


A function $\theta\in H^{\infty}$ is said to be an \textit{inner function} if  $|\tilde{\theta}(e^{it})|=1$
a.e. on $\mathbb{T}$. Any inner function $\theta$ can be factorized as
$\theta=BS,$
where $B$ is a Blaschke product and $S$ is a singular inner function, which we referred as the inner
 factorization of the function $\theta$ \cite[Corollary 2.6.6]{texthardy}. It is also important to
 note that this factorization is
 unique up to unimodular constants.

Every Blaschke product $B$ will be of the form
$$B(z)=\gamma z^m\prod_{i=1}^{\infty}\dfrac{|a_i|}{a_i}\dfrac{a_i-z}{1-\overline{a_i}z}
$$
 where $m\in \mathbb{N}\cup \{ 0 \}$ and $\{a_i\}$ is a complex sequence (possibly finite) in $\mathbb{D}$
 such that $\sum_{i\in\mathbb{N}}(1-|a_i|)<\infty$ and $\gamma\in \mathbb{T}$. For $a\in \mathbb{D}$,
 we denote
$B_a(z)=(a-z)/(1-\overline{a}z)$,  $z\in \mathbb{D}$. For $i\in \mathbb{N}$, we denote
 $\alpha_i=|a_i|/a_i$, if $a_i\neq0$ and $\alpha_i=-1$, if $a_i=0$. Hence by grouping  zeros,
  any arbitrary Blaschke product can be written as $B=\gamma \prod_{i\in\mathbb{N}}(\alpha_iB_{a_i})^{m_i}$,
   where $a_i$'s are distinct zeros of $B$ with corresponding multiplicities $m_i$'s and
    $\gamma\in \mathbb{T}$. Throughout this article, we use the latter format of $B$ for an arbitrary
     Blaschke product. Also, any non-vanishing inner function (singular inner function) $S$ will be
      of the form
 $$S(z)=S_{\mu}(z)=\alpha \exp\left(-\int\limits_{\mathbb{T}}\dfrac{t+z}{t-z}d\mu(t)\right)~~~~
 (z\in \mathbb{D})$$
  for some finite positive Borel measure $\mu$ on $\mathbb{T}$, which is singular with respect
  to Lebesgue measure on $\mathbb{T}$
  and $\alpha\in\mathbb{T}$.

For $p>0$, any function in $H^p$ has a canonical factorization 
%by the following classical theorem 
\cite[Theorem 2.5]{duren}.
It states that all zeros of a function in $H^p$ can be factored out. 

\begin{theorem}\label{RFT}[Riesz factorization theorem]
	Let $f\in H^p$ for some $p>0$ and $f\not\equiv 0$. Then, there exists a Blaschke product
$B$ and a non-vanishing function $g$ in $H^p$ such that
	$f =Bg$ with $\Vert f\Vert_{p}=\Vert g\Vert_{p}$.
\end{theorem}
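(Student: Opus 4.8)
The plan is to prove the theorem by peeling off the zeros of $f$ as a Blaschke product. Since $f\not\equiv 0$, its zero set in $\mathbb{D}$ has no accumulation point in $\mathbb{D}$, so the zeros of $f$, listed with multiplicity, form a finite or countable sequence $\{a_n\}$; after writing $f(z)=z^m h(z)$ with $h(0)\neq 0$ (which alters neither the $H^p$-norm nor the conclusion) I may assume $f(0)\neq 0$. The crucial first step is to verify the Blaschke condition $\sum_n(1-|a_n|)<\infty$. For this I would apply Jensen's formula to $f$ on $\{|z|\le r\}$, for those $r<1$ having no zero of $f$ on $|z|=r$, obtaining
\[
\log|f(0)|+\sum_{|a_n|<r}\log\frac{r}{|a_n|}=\frac{1}{2\pi}\int_0^{2\pi}\log\bigl|f(re^{i\theta})\bigr|\,d\theta .
\]
By concavity of $\log$ and Jensen's inequality the right-hand side is at most $\tfrac1p\log\bigl(\tfrac{1}{2\pi}\int_0^{2\pi}|f(re^{i\theta})|^p\,d\theta\bigr)\le\log\Vert f\Vert_p$, a bound independent of $r$; hence $\sum_{|a_n|<r}\log(r/|a_n|)$ stays bounded as $r\to1^-$, which forces $\sum_n\log(1/|a_n|)<\infty$, equivalently $\sum_n(1-|a_n|)<\infty$. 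Let $B$ be the Blaschke product with zero sequence $\{a_n\}$ together with the factor $z^m$; then $g:=f/B$ is holomorphic and zero-free on $\mathbb{D}$, and $f=Bg$.

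It remains to show $g\in H^p$ with $\Vert g\Vert_p=\Vert f\Vert_p$, which I would do by approximation with finite Blaschke products. For each $n$ let $B_n$ be the finite Blaschke product built from $z^m$ and the first $n$ zeros $a_1,\dots,a_n$, and put $g_n:=f/B_n$. Since $B_n$ extends continuously to $\overline{\mathbb{D}}$ with $|B_n|\equiv1$ on $\mathbb{T}$, and $|B_n|$ is bounded below on some annulus $\{\rho_n<|z|<1\}$ while $g_n$ is bounded on $\{|z|\le\rho_n\}$, we get $g_n\in H^p$; and because $|\tilde{B}_n|=1$ a.e.\ on $\mathbb{T}$, the norm formula of Fatou's theorem gives $\Vert g_n\Vert_p=\Vert\tilde{g}_n\Vert_{L^p(\mathbb{T})}=\Vert\tilde{f}\Vert_{L^p(\mathbb{T})}=\Vert f\Vert_p$.

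The Blaschke condition makes $B_n\to B$ uniformly on compact subsets of $\mathbb{D}$, so $g_n\to g$ pointwise on $\mathbb{D}$, and by Fatou's lemma, for every $r<1$,
\[
\frac{1}{2\pi}\int_0^{2\pi}\bigl|g(re^{i\theta})\bigr|^p\,d\theta\le\liminf_{n\to\infty}\frac{1}{2\pi}\int_0^{2\pi}\bigl|g_n(re^{i\theta})\bigr|^p\,d\theta\le\liminf_{n\to\infty}\Vert g_n\Vert_p^p=\Vert f\Vert_p^p ,
\]
whence $g\in H^p$ with $\Vert g\Vert_p\le\Vert f\Vert_p$. The reverse inequality is immediate: $|f(z)|=|B(z)|\,|g(z)|\le|g(z)|$ on $\mathbb{D}$ forces $\Vert f\Vert_p\le\Vert g\Vert_p$. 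Therefore $\Vert f\Vert_p=\Vert g\Vert_p$, and $g$ is the desired non-vanishing function in $H^p$.

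I expect the main obstacle to be the norm bookkeeping in the approximation: one has to arrange that the exact identity $\Vert g_n\Vert_p=\Vert f\Vert_p$ holds for every $n$, so that it is preserved under $n\to\infty$, and one must justify exchanging limit and integral on each fixed circle. The tools that make this go through are the uniform-on-compacta convergence $B_n\to B$ (a direct consequence of the Blaschke condition), the boundary behaviour of finite Blaschke products, the monotonicity of $r\mapsto\frac{1}{2\pi}\int_0^{2\pi}|g_n(re^{i\theta})|^p\,d\theta$ (subharmonicity of $|g_n|^p$) which underlies the equality $\Vert g_n\Vert_p=\Vert\tilde{g}_n\Vert_{L^p(\mathbb{T})}$, and Fatou's lemma, which delivers the one-sided bound that survives the limit. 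The remaining ingredients — Jensen's formula and the equivalence of $\sum\log(1/|a_n|)<\infty$ with $\sum(1-|a_n|)<\infty$ — are routine.
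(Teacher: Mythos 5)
This theorem is quoted in the paper as \cite[Theorem 2.5]{duren} with no proof supplied, so there is nothing in the paper to compare your argument against; what you have written is essentially the classical Riesz/Duren proof, and it is correct in structure: Jensen's formula plus the concavity of $\log$ to get the Blaschke condition, division by the partial products $B_n$, the bound $\Vert g_n\Vert_p\le\Vert f\Vert_p$, Fatou's lemma on each circle to pass to $g=f/B$, and the trivial reverse inequality from $|B|\le1$.

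One step deserves care. You obtain $\Vert g_n\Vert_p=\Vert f\Vert_p$ by invoking the identity $\Vert h\Vert_p=\Vert\tilde h\Vert_{L^p(\mathbb{T})}$ for $h\in H^p$. In the standard development (Duren, Theorem 2.6) that identity is proved \emph{after}, and by means of, the very factorization theorem you are proving, so as written the argument is circular if one follows that logical order. The fix is cheap, and you only need the one-sided bound anyway: since $B_n$ is a \emph{finite} Blaschke product, $\min_\theta|B_n(re^{i\theta})|\to1$ as $r\to1^-$, so
\[
\frac{1}{2\pi}\int_0^{2\pi}|g_n(\rho e^{i\theta})|^p\,d\theta\le\lim_{r\to1^-}\frac{1}{2\pi}\int_0^{2\pi}\frac{|f(re^{i\theta})|^p}{|B_n(re^{i\theta})|^p}\,d\theta\le\Vert f\Vert_p^p ,
\]
using only the monotonicity in $r$ of the integral means of $|g_n|^p$ (subharmonicity), with no appeal to boundary values. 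This gives $\Vert g_n\Vert_p\le\Vert f\Vert_p$, which is all your Fatou-lemma step requires, since the reverse inequality $\Vert f\Vert_p\le\Vert g\Vert_p$ is already supplied at the end by $|f|=|B||g|\le|g|$. With that substitution the proof is complete and self-contained.
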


%\textcolor[rgb]{0.00,0.00,1.00}{Inner-outer factorization Theorem: .....}

For a function $f\in H^p$, $Z(f)$ denotes the set of all zeros of $f$  inside $\mathbb{D}$ and
 multiplicity of any $w\in Z(f)$ is denoted as ${mult}_f(w)$. 
 %Throughout this article, we consider   $\phi $ as a holomorphic self map on $\mathbb{D}$.
% Lat($C_\phi$) denotes the set of all closed invariant subspaces of $C_\phi$ on $H^p$.

\section{Composition operators between Beurling subspaces}
%This section discusses the mapping of composition operators between Beurling subspaces. For
Consider the inner functions $\theta_1(z)=\exp({\frac{z+a}{z-a}})$ and $\theta_2(z)=\exp({\frac{z+b}{z-b}})$
 for some $a,b\in \mathbb{T}$.
 Matache \cite{valentine} gave a %[Theorem 2.11]
characterization  for 
 $C_{\phi}(\theta_1H^2)\subseteq \theta_2H^2$ in terms
 of the angular derivative and raised the Question \ref{qn} for singular inner function case.
  We  solve this question
 for general inner functions.

Let  $\theta_1$ and  $\theta_2$ be two arbitrary inner functions and suppose
$C_{\phi}(\theta_1H^p)\subseteq \theta_2H^p$.
Then for any $f\in H^p$, there exists $g\in H^p$ such that $(\theta_1f)\circ \phi=g\theta_2.$
In particular by taking $f\equiv 1$, we get $ \theta_1 \circ \phi=g\theta_2$ for some $g\in H^p$.
Therefore, $Z(\theta_2)\subseteq Z(\theta _1\circ \phi),$
which is equivalent to saying that $\phi$ maps $Z(\theta_2)$ into $Z(\theta_1).$
Moreover, we also have  $mult_{\theta_1\circ \phi}(w)\geq mult_{\theta_2}(w)$ for all $w\in Z(\theta_2)$.
\begin{theorem}{\label{main}}
	Let $\theta_1$ and $\theta_2$ be inner functions and $\phi$ be a holomorphic self map on $\mathbb{D}$.
 Then $C_{\phi}(\theta_1H^p)\subseteq\theta_2H^p$ for some $p>0$ if and only if $(\theta_1 \circ \phi)/\theta_2\in H^{\infty}$.
 
% the following statements are equivalent:
%	\begin{enumerate}
%		\item 	
%		\item $\dfrac{\theta_1 \circ \phi}{\theta_2}\in H^p$
%		\item 
%	\end{enumerate}
\end{theorem}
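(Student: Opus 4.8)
The plan is to prove both directions by reducing everything to a statement about bounded holomorphic functions, using the fact that a quotient of inner functions is always in $H^\infty$ precisely when it is a bounded holomorphic function on $\mathbb D$, together with Theorem \ref{bdd}.

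For the forward direction, suppose $C_\phi(\theta_1 H^p)\subseteq \theta_2 H^p$. Taking $f\equiv 1$ (as already observed in the discussion preceding the theorem) gives $\theta_1\circ\phi = g\theta_2$ for some $g\in H^p$; hence $h:=(\theta_1\circ\phi)/\theta_2 = g$ is a priori only a function in $H^p$, but I claim its boundary values satisfy $|\tilde h|\le 1$ a.e. Indeed, $\theta_1\circ\phi\in H^\infty$ with $\|\theta_1\circ\phi\|_\infty\le 1$ since $\theta_1$ maps $\mathbb D$ into $\overline{\mathbb D}$ and $\phi$ maps $\mathbb D$ into $\mathbb D$; and $|\tilde\theta_2|=1$ a.e.\ on $\mathbb T$ because $\theta_2$ is inner. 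Therefore $|\tilde h(e^{it})| = |\widetilde{\theta_1\circ\phi}(e^{it})| \le 1$ a.e.\ on $\mathbb T$. Since $h\in H^p$ and $\tilde h\in L^\infty(\mathbb T)$, Theorem \ref{bdd} gives $h\in H^\infty$, i.e.\ $(\theta_1\circ\phi)/\theta_2\in H^\infty$. (A subtlety to handle carefully: one must know that the meromorphic function $(\theta_1\circ\phi)/\theta_2$ genuinely equals the $H^p$ function $g$ as a holomorphic function on $\mathbb D$, not merely on a boundary set; but this is immediate from the identity $\theta_1\circ\phi = g\theta_2$ of holomorphic functions on $\mathbb D$, which forces every zero of $\theta_2$ to be cancelled, so the quotient extends holomorphically.)

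For the converse, suppose $h:=(\theta_1\circ\phi)/\theta_2\in H^\infty$. Then for any $f\in H^p$ we compute
$$
(\theta_1 f)\circ\phi = (\theta_1\circ\phi)\,(f\circ\phi) = h\,\theta_2\,(f\circ\phi).
$$
Now $f\circ\phi = C_\phi f\in H^p$ because composition operators map $H^p$ into itself for every holomorphic self map $\phi$ (noted in the introduction), and $h\in H^\infty$, so $h\,(f\circ\phi)\in H^p$ by the elementary fact that multiplication by an $H^\infty$ function preserves $H^p$. Writing $g:=h\,(f\circ\phi)\in H^p$, we get $(\theta_1 f)\circ\phi = g\,\theta_2 \in \theta_2 H^p$, which is exactly $C_\phi(\theta_1 H^p)\subseteq\theta_2 H^p$.

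The main obstacle, and the only place requiring genuine care, is the forward direction: extracting from the mere containment of subspaces the analytic statement that $(\theta_1\circ\phi)/\theta_2$ is bounded. The key insight is that one does not need the full strength of the hypothesis for all $f$; the single test function $f\equiv 1$ already produces the quotient as an $H^p$ function, and then boundedness is \emph{automatic} because the numerator is bounded by $1$ in modulus and the denominator is unimodular on the boundary — so the partial converse to Fatou's theorem (Theorem \ref{bdd}) upgrades $H^p$ membership to $H^\infty$ membership for free. It is worth remarking that this argument is uniform in $p$, which explains why the statement holds for all $p>0$ simultaneously and why ``for some $p>0$'' and ``for all $p>0$'' are equivalent here.
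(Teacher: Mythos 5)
Your proof is correct and follows essentially the same route as the paper's: test with $f\equiv 1$ to get $(\theta_1\circ\phi)/\theta_2\in H^p$, use that $\theta_1\circ\phi$ is bounded by $1$ and $\theta_2$ is unimodular on $\mathbb{T}$ to bound the boundary values, and invoke Theorem \ref{bdd}; the converse is the same one-line factorization. Your added remark on why the quotient is genuinely holomorphic is a nice touch but not a departure from the paper's argument.
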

\begin{proof}
	 Suppose $C_{\phi}(\theta_1H^p)\subseteq\theta_2H^p$. Then $\theta_1 \circ \phi=\theta_2f$
 for some $f\in H^p$.
	That is, $(\theta_1 \circ \phi)/\theta_2=f\in H^p$.
 As
	$\theta_1$ is an inner function and $\phi$ is a self map of $\mathbb{D}$, it is evident that
$|(\theta_1\circ\phi)(e^{it})|\leq1$
 a.e.  on $ \mathbb{T}.$
	Since $\theta_2 $ is an inner function, we get
	$|\tilde{f}(e^{it})|\leq1$ a.e. on $ \mathbb{T}.$ 	Thus by Theorem \ref{bdd},
 $f\in H^{\infty}$ with $\Vert f\Vert_{H^\infty}=\Vert \tilde{f}\Vert_{L^{\infty}}\leq 1$.
	
%	$(3)\Rightarrow(1)$: (\mathbb{T})
	For the converse part, suppose that $(\theta_1 \circ \phi)/\theta_2=f\in H^{\infty}$.
	For $h\in H^p$ we get,
	$$C_{\phi}(\theta_1h)=(\theta_1h)\circ\phi=(\theta_1\circ\phi)(h\circ\phi)=
\theta_2f(h\circ\phi)\in \theta_2H^p.$$ Hence the desired result $C_{\phi}(\theta_1H^p)\subseteq\theta_2H^p$ holds.
\end{proof}

\begin{corollary}%\label{inner}
	Let  $\theta_1$ and $\theta_2$ be inner functions and $\phi$ be a holomorphic self map on $\mathbb{D}$. Then
	$C_{\phi}(\theta_1H^p)\subseteq\theta_2H^p$ for some $p>0$	if and only if
	$C_{\phi}(\theta_1H^2)\subseteq\theta_2H^2$.
\end{corollary}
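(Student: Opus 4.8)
The plan is simply to funnel both containments through the $p$-free criterion supplied by Theorem~\ref{main}. That theorem asserts that, for inner functions $\theta_1,\theta_2$ and a holomorphic self map $\phi$, the inclusion $C_{\phi}(\theta_1H^p)\subseteq\theta_2H^p$ holds for some $p>0$ exactly when $(\theta_1\circ\phi)/\theta_2\in H^{\infty}$, and the crucial observation is that this right-hand condition does not mention $p$ at all. Hence I would argue as follows: if $C_{\phi}(\theta_1H^p)\subseteq\theta_2H^p$ for some $p>0$, then by the forward implication of Theorem~\ref{main} we get $(\theta_1\circ\phi)/\theta_2\in H^{\infty}$; applying the converse implication of Theorem~\ref{main} with the exponent $2$ then returns $C_{\phi}(\theta_1H^2)\subseteq\theta_2H^2$. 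The reverse direction of the corollary is the same chain run with the roles of $p$ and $2$ interchanged, starting from $C_{\phi}(\theta_1H^2)\subseteq\theta_2H^2$.

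For clarity it is worth isolating what the proof of Theorem~\ref{main} actually gives: the forward direction ``$C_{\phi}(\theta_1H^q)\subseteq\theta_2H^q\Rightarrow(\theta_1\circ\phi)/\theta_2\in H^{\infty}$'' is valid for each individual exponent $q>0$, while the backward direction ``$(\theta_1\circ\phi)/\theta_2\in H^{\infty}\Rightarrow C_{\phi}(\theta_1H^q)\subseteq\theta_2H^q$'' produces the containment simultaneously for every $q>0$. With this reading in hand, no quantifier juggling is needed beyond instantiating $q=2$ (resp.\ $q=p$) at the appropriate moment.

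There is essentially no obstacle here: the statement is a bookkeeping consequence of Theorem~\ref{main}, and the only point deserving a second's attention is making sure the existential ``for some $p>0$'' in the hypothesis is handled consistently with the ``for some $p>0$'' appearing in Theorem~\ref{main} — which it is, precisely because the intermediate condition $(\theta_1\circ\phi)/\theta_2\in H^{\infty}$ is an absolute, $p$-independent statement about $\phi$, $\theta_1$ and $\theta_2$.
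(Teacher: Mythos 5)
Your argument is correct and is exactly the intended one: the paper states this corollary without proof precisely because it follows immediately from Theorem~\ref{main}, whose criterion $(\theta_1\circ\phi)/\theta_2\in H^{\infty}$ is independent of the exponent. Your reading of which direction of Theorem~\ref{main} holds for a fixed exponent versus for all exponents is also accurate, so nothing is missing.
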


\begin{corollary}\label{inner}
	Let $\phi$, $\theta_1$ and $\theta_2$ be inner functions. Then
	$C_{\phi}(\theta_1H^p)\subseteq\theta_2H^p$	if and only if
	$(\theta_1 \circ \phi)/\theta_2$ is an inner function.
\end{corollary}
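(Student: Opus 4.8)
The natural plan is to bootstrap from Theorem \ref{main}. That result already identifies $C_{\phi}(\theta_1H^p)\subseteq\theta_2H^p$ with the condition $g:=(\theta_1\circ\phi)/\theta_2\in H^{\infty}$, and its proof in fact shows $\|g\|_{H^{\infty}}\le 1$ whenever the containment holds. So the whole content of the corollary is that, \emph{when $\phi$ is inner}, membership of $g$ in $H^{\infty}$ is equivalent to $g$ being inner. The implication ``$g$ inner $\Rightarrow$ containment'' is immediate: an inner function lies in $H^{\infty}$, so Theorem \ref{main} applies (and this half does not use that $\phi$ is inner).

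For the converse, assume $C_{\phi}(\theta_1H^p)\subseteq\theta_2H^p$. By Theorem \ref{main} we may write $\theta_1\circ\phi=\theta_2\,g$ with $g\in H^{\infty}$, $\|g\|_{H^{\infty}}\le 1$. The key observation I would invoke is that $\theta_1\circ\phi$ is again an inner function, since the composition of two inner functions is inner. Granting that, passing to radial limits in $\theta_1\circ\phi=\theta_2 g$ and using $|\widetilde{\theta_1\circ\phi}|=|\tilde\theta_2|=1$ a.e.\ on $\mathbb{T}$ forces $|\tilde g|=1$ a.e.; together with $g\in H^{\infty}$, Theorem \ref{bdd}-type reasoning (indeed, just the definition of inner) gives that $g$ is inner, as claimed.

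The only delicate point — hence the main obstacle — is the step that $\theta_1\circ\phi$ is inner. One cannot simply assert $\widetilde{\theta_1\circ\phi}=\tilde\theta_1\circ\tilde\phi$, because $\tilde\theta_1$ is defined only Lebesgue-a.e.\ on $\mathbb{T}$; the legitimacy of such a boundary identity rests on the classical fact that an inner $\phi$ pushes normalized arc-length measure on $\mathbb{T}$ forward to a measure absolutely continuous with respect to arc-length. In the write-up I would either cite this standard fact, or argue instead by first verifying the claim for finite Blaschke products $\theta_1$ (where $\theta_1$ extends continuously to $\overline{\mathbb{D}}$ and carries $\mathbb{T}$ into $\mathbb{T}$, so that $|\widetilde{\theta_1\circ\phi}|=|\theta_1(\tilde\phi)|=1$ a.e.\ with no measure-theoretic input) and then reaching an arbitrary inner $\theta_1$ via a Frostman-shift and normal-families approximation.
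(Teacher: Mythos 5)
Your proof is correct and follows essentially the same route as the paper: apply Theorem \ref{main} to obtain $g=(\theta_1\circ\phi)/\theta_2\in H^{\infty}$, then use that $\theta_1\circ\phi$ is inner to force $|\tilde g|=1$ a.e.\ on $\mathbb{T}$. The paper simply asserts that $|\widetilde{\theta_1\circ\phi}|=1$ a.e.\ because all three maps are inner, so your extra care in justifying that a composition of inner functions is inner (via absolute continuity of the pushforward of arc-length under an inner map) only makes explicit a step the paper takes for granted.
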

\begin{proof}
	Suppose $C_{\phi}(\theta_1H^p)\subseteq\theta_2H^p$. Take $f =(\theta_1\circ\phi)/\theta_2$.
 As $\theta_1$,$\theta_2$ and $\phi$ all are inner functions, $ |f(e^{it})|=1$  a.e. on  $\mathbb{T}.$
 By Theorem \ref{main}, $f\in H^{\infty}$ and
	therefore $f$ is an inner function. The converse part follows trivially.
\end{proof}
For $f,g\in H^{\infty}$, we say that ``$f$ divides $g$'' if there exists some $h\in H^{\infty}$
 such that  $g=fh$.
When $\theta_1, \theta_2$ are Blaschke products in Theorem \ref{main}, we have another characterization in terms of the multiplicities of zeros.
\begin{proposition} \label{charb}	
%	Let $\phi$ be a holomorphic self map on $\mathbb{D}$ and 
Let $B_1$, $B_2$ be two arbitrary Blaschke products and let $\phi\in \mathcal{S}$.
Then  $C_{\phi}(B_1H^p)\subseteq B_2H^p$ if and only if $ mult_{B_2}(w)\leq{mult}_{B_1\circ \phi}(w)$
 for all $w$ in $Z(B_2)$.
\end{proposition}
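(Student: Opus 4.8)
The plan is to read the statement through Theorem~\ref{main}, which already shows that $C_{\phi}(B_1H^p)\subseteq B_2H^p$ is equivalent to $(B_1\circ\phi)/B_2\in H^{\infty}$. Writing $F:=B_1\circ\phi$, note that $F$ is a holomorphic self map of $\mathbb{D}$, hence $F\in\mathcal{B}_1\subseteq H^{\infty}$. Thus the whole proposition reduces to the purely function-theoretic claim that, for a Blaschke product $B_2$ and $F\in H^{\infty}$, one has $F/B_2\in H^{\infty}$ if and only if $mult_{B_2}(w)\le mult_{F}(w)$ for every $w\in Z(B_2)$. The degenerate case $F\equiv 0$ (which happens exactly when $\phi$ is the constant map onto a zero of $B_1$) is immediate, since then $F/B_2\equiv 0\in H^{\infty}$ while $mult_F\equiv\infty$; so I assume $F\not\equiv 0$ below.

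For the forward direction, suppose $F/B_2=f\in H^{\infty}$, i.e.\ $F=B_2 f$. For $w\in Z(B_2)$ the order of vanishing of $F$ at $w$ is the order of $B_2$ at $w$ plus the (nonnegative) order of $f$ at $w$, hence $mult_F(w)\ge mult_{B_2}(w)$. This is exactly the elementary observation recorded just before Theorem~\ref{main}, now read in the Blaschke setting, so there is nothing new to do here.

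For the converse, assume $mult_{B_2}(w)\le mult_{F}(w)$ for all $w\in Z(B_2)$. By the Riesz factorization theorem (Theorem~\ref{RFT}) write $F=B_F\,h$, where $B_F$ is the Blaschke product whose zeros, with multiplicities, are exactly those of $F$, and $h\in H^p$ is non-vanishing; since $\lvert\tilde h\rvert=\lvert\tilde F\rvert\le 1$ a.e.\ on $\mathbb{T}$, Theorem~\ref{bdd} gives $h\in H^{\infty}$. The hypothesis now says precisely that every zero of $B_2$ is a zero of $B_F$ of at least the same multiplicity, so $B_2$ divides $B_F$ and $B_3:=B_F/B_2$ is again a Blaschke product (justified below). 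Consequently $F/B_2=B_3\,h\in H^{\infty}$, and Theorem~\ref{main} yields $C_{\phi}(B_1H^p)\subseteq B_2H^p$, which is the desired conclusion.

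The only point that is not mere bookkeeping is the claim that $B_F/B_2$ is a Blaschke product — in particular that this quotient is \emph{bounded}. I would argue this by writing $B_F$ and $B_2$ as the products of their Möbius factors $B_{a}$, each repeated according to its multiplicity, times unimodular constants: the multiplicity hypothesis says the factor multiset of $B_2$ is contained in that of $B_F$, so $B_F/B_2$ equals, up to a unimodular constant, the product of the remaining factors of $B_F$, a sub-multiset that still satisfies the Blaschke summability condition, hence a Blaschke product. (Equivalently, one passes to the limit from the finite partial products $B_2^{(N)}$ of $B_2$: each $B_F/B_2^{(N)}$ is visibly a Blaschke product with sup-norm $1$, and $B_F/B_2^{(N)}\to B_F/B_2$ locally uniformly on $\mathbb{D}$, so the limit lies in $H^{\infty}$.) I expect this divisibility step to be where essentially all the content sits; everything else is the reduction above together with the comparison of orders of vanishing.
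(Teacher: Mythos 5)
Your proposal is correct and follows essentially the same route as the paper: the forward direction by comparing orders of vanishing in $B_1\circ\phi=B_2f$, and the converse by applying Riesz factorization to $B_1\circ\phi$ and peeling off $B_2$ as a Blaschke divisor before invoking Theorem~\ref{main}. The extra care you take with the degenerate case $B_1\circ\phi\equiv 0$ and with the boundedness of the quotient Blaschke product is just added rigor on the same argument, not a different method.
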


\begin{proof}
	Suppose $C_{\phi}(B_1H^p)\subseteq B_2H^p$. Then we have $B_1 \circ \phi={B_2}f$ for
some $f\in H^p$. If $w\in Z(B_2)$ with ${mult}_{B_2}(w)=m$, then $(z-w)^m$ divides $B_2$ and
hence $(z-w)^m$ divides $B_1\circ \phi$ with $mult_{B_1\circ \phi}(w)\geq m$.

	Conversely, suppose that  $ {mult}_{B_2}(w)\leq {mult}_{B_1\circ \phi}(w)$ for all $w\in Z(B_2)$.
 Since $B_1\circ\phi\in H^{\infty}$, as a consequence of %Riesz factorization
  Theorem \ref{RFT}, there
 exists a Blaschke product $B_3$ and a non-vanishing function $g\in H^{\infty}$ such that
 $B_1\circ \phi=gB_2B_3$ and thus  $(B_1\circ\phi)/B_2=gB_3\in H^{\infty}$.  Hence by  Theorem \ref{main},
  $C_{\phi}$ maps $B_1H^p$ into $B_2H^p$.
\end{proof}

	For an inner function $\theta$ and $\phi\in \mathcal{S}$, we denote the set of all
inner functions $\theta_1$ such that $C_{\phi}(\theta_1 H^p)\subseteq \theta H^p$ by the notation
 $\mathcal{L}_{\theta,\phi}$.

\begin{lemma}\label{pro}
	If $\theta_1\in\mathcal{L}_{\theta,\phi}$ for some inner function $\theta$ and
$\phi\in \mathcal{S}$, then $\theta_2 \theta_1\in\mathcal{L}_{\theta,\phi}$ for every inner function $\theta_2$.
\end{lemma}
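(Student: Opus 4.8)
The plan is to reduce everything to Theorem \ref{main}, which converts the containment $C_\phi(\psi H^p)\subseteq \theta H^p$ into the single condition $(\psi\circ\phi)/\theta\in H^\infty$. First I would record that $\theta_2\theta_1$ is itself an inner function (a product of inner functions is inner, since the modulus of the boundary values multiplies), so it is a legitimate candidate for membership in $\mathcal{L}_{\theta,\phi}$. Then, by hypothesis and Theorem \ref{main}, the function $f:=(\theta_1\circ\phi)/\theta$ lies in $H^\infty$.

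Next I would compute $(\theta_2\theta_1)\circ\phi=(\theta_2\circ\phi)(\theta_1\circ\phi)$, so that
$$\frac{(\theta_2\theta_1)\circ\phi}{\theta}=(\theta_2\circ\phi)\cdot\frac{\theta_1\circ\phi}{\theta}=(\theta_2\circ\phi)\,f.$$
Since $\theta_2$ is inner and $\phi\in\mathcal{S}$, we have $\theta_2\circ\phi\in H^\infty$ with $\|\theta_2\circ\phi\|_{H^\infty}\le 1$ (as $|\widetilde{\theta_2\circ\phi}(e^{it})|\le 1$ a.e., using $|\tilde\phi|\le 1$ a.e. together with Theorem \ref{bdd}). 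As $H^\infty$ is an algebra, the product $(\theta_2\circ\phi)\,f$ belongs to $H^\infty$. Applying the converse direction of Theorem \ref{main} with inner functions $\theta_2\theta_1$ and $\theta$ then gives $C_\phi((\theta_2\theta_1)H^p)\subseteq\theta H^p$, i.e. $\theta_2\theta_1\in\mathcal{L}_{\theta,\phi}$.

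There is no real obstacle here: the statement is essentially the observation that the condition in Theorem \ref{main} is preserved under multiplying $\theta_1$ by any inner (indeed any $H^\infty$-bounded) factor, because such a factor composed with $\phi$ stays in the multiplier algebra $H^\infty$ of $H^p$. The only points worth a sentence are that composition with $\phi$ does not enlarge the sup-norm of an inner function and that $H^\infty$ is closed under products; both are standard and already implicit in the preliminaries.
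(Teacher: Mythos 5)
Your proof is correct and rests on the same key observation as the paper's: $(\theta_2\theta_1)\circ\phi=(\theta_2\circ\phi)(\theta_1\circ\phi)$ with $\theta_2\circ\phi\in H^{\infty}$ acting as a multiplier. The only cosmetic difference is that you route both directions through Theorem \ref{main}, whereas the paper argues directly on elements $\theta_1 f\in\theta_1 H^p$; the substance is identical.
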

\begin{proof}
	For any inner function  $\theta_2$ and for any $f\in H^p$, we have $$C_\phi(\theta_2 \theta_1f)=(\theta_2\circ\phi)((\theta_1f)\circ\phi)=(\theta_2\circ\phi)\theta g\in \theta H^p,$$
	for some $g\in H^p$. That is, $C_{\phi}(\theta_2\theta_1H^p)\subseteq \theta H^p.$ The desired result follows.
\end{proof}
\begin{proposition}\label{compauto}
	Let $B$ be a Blaschke product and $\phi$ be a disk automorphism. Then $B\circ \phi$ is a Blaschke product.
\end{proposition}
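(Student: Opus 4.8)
The plan is to combine two ingredients: composition with a disk automorphism carries inner functions to inner functions, and the factorization of an inner function into a Blaschke product times a singular inner function is unique up to a unimodular constant. Granting these, once we know that $B\circ\phi$ is inner it splits, via the Riesz factorization theorem, as (Blaschke)$\,\cdot\,$(singular inner), and the whole content of the proposition reduces to showing that the singular factor is a unimodular constant.

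First I would record that $B\circ\phi$ is inner. It lies in $H^{\infty}$ with $\Vert B\circ\phi\Vert_{\infty}\le 1$ since $\phi\in\mathcal{S}$ and $B\in\mathcal{B}_1$. Because $\phi$ is a M\"obius automorphism of $\mathbb{D}$, its pole lies outside $\overline{\mathbb{D}}$, so $\phi$ (and $\phi^{-1}$) is real-analytic on a neighbourhood of $\overline{\mathbb{D}}$; hence $\phi$ restricts to a bi-Lipschitz homeomorphism of $\mathbb{T}$, preserving Lebesgue-null sets in both directions, and it maps each radius to a circular arc that meets $\mathbb{T}$ at $\phi(e^{it})$ nontangentially. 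Consequently, for a.e.\ $e^{it}$ the radial limit of $B\circ\phi$ at $e^{it}$ equals the nontangential limit of $B$ at $\phi(e^{it})$, which has modulus $1$; so $B\circ\phi$ is inner. (Alternatively one may quote that a composition of inner functions is inner, a disk automorphism being a degree-one Blaschke product.) The same argument shows that $\psi\circ\phi$ and $\psi\circ\phi^{-1}$ are inner for every inner function $\psi$.

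Next I would apply Theorem~\ref{RFT} to $B\circ\phi\in H^{2}$: write $B\circ\phi=B_{0}S$ with $B_{0}$ a Blaschke product and $S$ non-vanishing in $H^{2}$. Since the boundary functions of $B\circ\phi$ and $B_{0}$ both have modulus $1$ a.e.\ on $\mathbb{T}$, so does that of $S$; hence by Theorem~\ref{bdd} $S\in H^{\infty}$ with $\Vert S\Vert_{\infty}=1$, i.e.\ $S$ is a non-vanishing (equivalently, singular) inner function. It remains to show $S$ is a unimodular constant, and for this I would compose back with the automorphism $\phi^{-1}$:
$$B=(B\circ\phi)\circ\phi^{-1}=(B_{0}\circ\phi^{-1})\,(S\circ\phi^{-1}).$$
Here $S\circ\phi^{-1}$ is inner by the previous paragraph and non-vanishing because $S$ is and $\phi^{-1}(\mathbb{D})=\mathbb{D}$; and $B_{0}\circ\phi^{-1}$ is inner, so the same Riesz-factorization argument gives $B_{0}\circ\phi^{-1}=B_{1}S_{1}$ with $B_{1}$ a Blaschke product and $S_{1}$ non-vanishing inner. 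Then $B=B_{1}\big(S_{1}(S\circ\phi^{-1})\big)$ displays $B$ as a Blaschke product times the non-vanishing inner function $S_{1}(S\circ\phi^{-1})$. By uniqueness of the Blaschke--singular factorization, and since $B$ is itself a Blaschke product, $S_{1}(S\circ\phi^{-1})$ must be a unimodular constant; as $|S_{1}|\le 1$ and $|S\circ\phi^{-1}|\le 1$ on $\mathbb{D}$ while their product has modulus $1$, the maximum modulus principle forces $S\circ\phi^{-1}$, and hence $S=(S\circ\phi^{-1})\circ\phi$, to be a unimodular constant. Therefore $B\circ\phi=B_{0}S$ is a unimodular constant times a Blaschke product, which is again a Blaschke product.

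The main obstacle is choosing this line of attack rather than a direct one. A direct attempt to prove the singular factor of $B\circ\phi$ trivial — for instance by showing $\lim_{r\to1^{-}}\int_{\mathbb{T}}\log|(B\circ\phi)(r\zeta)|\,dm(\zeta)=0$ via superharmonicity of $-\log|B\circ\phi|$ and harmonic-measure comparisons — yields monotonicity and boundedness of that integral but not the vanishing of its limit. Using the invertibility of $\phi$ to transport the putative singular factor of $B\circ\phi$ back onto the known Blaschke product $B$, and then invoking uniqueness of factorization, circumvents this difficulty; the only remaining routine point is the boundary-value identity in the innerness step.
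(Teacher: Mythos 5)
Your proof is correct and follows essentially the same route as the paper: factor $B\circ\phi$ into its Blaschke and singular parts, pull back through $\phi^{-1}$ onto $B$, and use uniqueness of the inner factorization to force the singular factors to be unimodular constants. The only difference is that you spell out the (standard) innerness of $B\circ\phi$ and of compositions with $\phi^{\pm 1}$, which the paper simply asserts.
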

\begin{proof}
	Set $\theta=B\circ\phi$. Then $\theta $ is an inner function. Let $\theta =B_1S_1$ be the inner
factorization of $\theta $. As $B_1\circ\phi^{-1}$ is also an inner function,  $B_1\circ\phi^{-1}=B_2S_2$
 be its inner factorization.
Call the singular inner function $S_1\circ\phi^{-1}$ by $S_3$.
Then, we  have $$B=\theta \circ \phi^{-1}=(B_1S_1)\circ\phi^{-1}=
(B_1\circ\phi^{-1})(S_1\circ\phi^{-1})=B_2S_2S_3.$$
 Since inner factorization is unique up to a unimodular
 constant multiplication, we get $B=\gamma_1B_2$ and $S_2S_3=\gamma_2$ for some  $\gamma_1, \gamma_2\in \mathbb{T}$.
  The latter is possible only when $S_2$ and $S_3$ are unimodular constants and thus $S_1$ is an unimodular constant, say
  $\gamma$.  It yields that
  $\theta =\gamma B_1$, which is a Blaschke product.
\end{proof}
\begin{theorem}\label{split}
	Let $\phi$ be an automorphism on $\mathbb{D}$ and consider the inner functions
$\theta_1=B_1S_1$ and $ \theta_2=B_2S_2$ where $B_1, B_2$ are Blaschke products and $S_1, S_2$
are singular inner functions. Then we have, $C_{\phi}(\theta_1H^p)\subseteq \theta_2 H^p$ if and only if
  $C_{\phi}(B_1H^p)\subseteq B_2 H^p$ and $C_{\phi}(S_1H^p)\subseteq S_2 H^p$.
\end{theorem}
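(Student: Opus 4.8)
The plan is to convert both sides into divisibility statements among inner functions by means of Theorem \ref{main}, and then invoke the uniqueness of the inner factorization to split a single divisibility into a ``Blaschke part'' and a ``singular part''.

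First I would record the relevant factorization fact. Since $\phi$ is an automorphism, it is itself inner, so $\theta_1\circ\phi$ is inner, and $\theta_1\circ\phi=(B_1\circ\phi)(S_1\circ\phi)$. By Proposition \ref{compauto}, $B_1\circ\phi$ is a Blaschke product; and $S_1\circ\phi$ is inner and non-vanishing (because $S_1$ is non-vanishing on $\mathbb{D}$ and $\phi(\mathbb{D})\subseteq\mathbb{D}$), hence a singular inner function. Thus $(B_1\circ\phi)(S_1\circ\phi)$ is precisely the inner factorization of $\theta_1\circ\phi$.

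The reverse implication is the easy one: if $C_{\phi}(B_1H^p)\subseteq B_2H^p$ and $C_{\phi}(S_1H^p)\subseteq S_2H^p$, then by Theorem \ref{main} both $(B_1\circ\phi)/B_2$ and $(S_1\circ\phi)/S_2$ lie in $H^{\infty}$, so their product $(\theta_1\circ\phi)/\theta_2$ lies in $H^{\infty}$, and Theorem \ref{main} again gives $C_{\phi}(\theta_1H^p)\subseteq\theta_2H^p$. For the forward implication, Theorem \ref{main} gives that $f:=(\theta_1\circ\phi)/\theta_2\in H^{\infty}$, and since $\theta_1\circ\phi$ and $\theta_2$ are both inner, $f$ is inner. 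Writing its inner factorization as $f=B_fS_f$, we obtain
$$(B_1\circ\phi)(S_1\circ\phi)=\theta_1\circ\phi=\theta_2 f=(B_2B_f)(S_2S_f).$$
Comparing the two inner factorizations of $\theta_1\circ\phi$ and using uniqueness up to unimodular constants, there exist $\gamma_1,\gamma_2\in\mathbb{T}$ with $B_1\circ\phi=\gamma_1 B_2B_f$ and $S_1\circ\phi=\gamma_2 S_2S_f$. Hence $(B_1\circ\phi)/B_2=\gamma_1 B_f\in H^{\infty}$ and $(S_1\circ\phi)/S_2=\gamma_2 S_f\in H^{\infty}$, and Theorem \ref{main} converts these back into $C_{\phi}(B_1H^p)\subseteq B_2H^p$ and $C_{\phi}(S_1H^p)\subseteq S_2H^p$.

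The only delicate point is the bookkeeping in the forward direction: one must be sure that $S_1\circ\phi$ really is a (non-vanishing) singular inner function so that matching $B_2B_f$ against $B_1\circ\phi$ and $S_2S_f$ against $S_1\circ\phi$ is forced by uniqueness of the factorization, and one should keep track of the unimodular constants and the degenerate cases in which a Blaschke or singular factor reduces to a unimodular constant. These are exactly the considerations already handled in the proof of Proposition \ref{compauto}, so no new difficulty arises.
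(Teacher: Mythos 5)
Your proof is correct and takes essentially the same route as the paper's: both pass to $f=(\theta_1\circ\phi)/\theta_2$ being inner, factor it, use Proposition \ref{compauto} to identify $B_1\circ\phi$ as the Blaschke part of $\theta_1\circ\phi$, and invoke uniqueness of the inner factorization to match $B_1\circ\phi$ with $B_2B_f$ and $S_1\circ\phi$ with $S_2S_f$ up to unimodular constants. Your explicit remark that $S_1\circ\phi$ is a non-vanishing inner (hence singular inner) function is a point the paper leaves implicit, but otherwise the arguments coincide.
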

\begin{proof}
	Assume that $C_{\phi}(\theta_1H^p)\subseteq \theta_2 H^p$. By Corollary \ref{inner},
$f=(\theta_1\circ\phi)/\theta_2$ is an inner function and let $f=B_3S_3$ be the inner
 factorization of $f$. Thus, we have
	$$B_2S_2B_3S_3=\theta_2 f =\theta_1\circ\phi=(B_1S_1)\circ\phi=(B_1\circ\phi)(S_1\circ\phi).$$
	By Proposition \ref{compauto}, $B_1\circ\phi$ is a Blaschke product.
Therefore,
$B_1\circ\phi=\gamma_1 B_2B_3$ and $S_1\circ\phi=\gamma_2 S_2S_3$ for some $\gamma_1, \gamma_2\in \mathbb{T}$.
 Hence by Theorem \ref{main}, $C_{\phi}(B_1H^p)\subseteq B_2 H^p$ and $C_{\phi}(S_1H^p)\subseteq S_2 H^p$.
  The converse part follows trivially.
\end{proof}
\begin{remark}\label{zero}
 Theorem \ref{split} holds under a weaker hypothesis, namely, $\phi\in \mathcal{S}$ such
 that $B_1\circ\phi$ is a Blaschke product. Also, it is trivial to see that
 $C_{\phi}(\theta_1H^p)\subseteq \theta_2 H^p$ implies
  $C_{\phi}(B_1H^p)\subseteq B_2 H^p$ for any holomorphic self map $\phi$ of $\mathbb{D}$.
\end{remark}
The following example shows that the  Theorem \ref{split} may fail
%not be true 
for a general
 $\phi\in \mathcal{S}$.
\begin{example}
	Consider $S(z)=\exp(\frac{z+1}{z-1})$. By \cite[Theorem 6.4]{garnett}, there exist $a\in \mathbb{D}$
 and a Blaschke product $B$ such that $B_a\circ B=S$. Since $S$ is not an automorphism, so is not $B$.
  Now consider $\phi=B$, $\theta_1=B_a$ and $\theta_2=S$. Then
	$$\dfrac{\theta_1\circ\phi}{\theta_2}=\dfrac{B_a\circ B}{S}=1\in H^\infty.$$
	However $(S_1\circ\phi)/S_2=1/S\notin H^\infty$, where $S_1$ and $S_2$ are the
 singular components of $\theta_1$ and $\theta_2$ respectively. Thus, Theorem \ref{split} can be
 false even if $\phi$ is an inner function.
\end{example}
 By taking  $\theta _1=\theta_2$ in Theorem \ref{split}, we get the following result.
\begin{corollary}\label{split2}
	Let $\phi$ be an automorphism on $\mathbb{D}$ and let $ \theta=BS$ be its inner
factorization.  Then
 $C_{\phi}(\theta H^p)\subseteq \theta H^p$ if and only if $C_{\phi}(B H^p)\subseteq B H^p$
 and $C_{\phi}(S H^p)\subseteq S H^p$.
\end{corollary}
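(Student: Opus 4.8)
The plan is to obtain this as an immediate specialization of Theorem \ref{split}, so there is essentially no new content to prove. First I would observe that the hypotheses of Theorem \ref{split} are met verbatim: $\phi$ is an automorphism of $\mathbb{D}$, and $\theta = BS$ is the inner factorization with $B$ a Blaschke product and $S$ a singular inner function. I would then apply Theorem \ref{split} with the choice $\theta_1 = \theta_2 = \theta$; by uniqueness of the inner factorization (up to unimodular constants) this forces $B_1 = B_2 = B$ and $S_1 = S_2 = S$. Substituting these into the equivalence furnished by Theorem \ref{split} gives exactly the claimed statement: $C_\phi(\theta H^p)\subseteq \theta H^p$ holds if and only if both $C_\phi(BH^p)\subseteq BH^p$ and $C_\phi(SH^p)\subseteq SH^p$ hold.

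The only point that needs a word of care is making sure nothing in Theorem \ref{split} secretly required $\theta_1 \ne \theta_2$; inspecting its proof, the argument splits $\theta_1\circ\phi = (B_1\circ\phi)(S_1\circ\phi)$, uses Proposition \ref{compauto} to see that $B_1\circ\phi$ is a Blaschke product, and then matches inner factorizations with $\theta_2 = B_2 S_2$ — all of which remains valid when $\theta_1 = \theta_2$. Hence the specialization is legitimate.

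I do not anticipate any obstacle here: the statement is a genuine corollary, and the proof is a one-line invocation of Theorem \ref{split}. If one wanted a self-contained proof instead, one could retrace the proof of Theorem \ref{split} in this special case — writing $f = (\theta\circ\phi)/\theta = B_3 S_3$ via Corollary \ref{inner}, using Proposition \ref{compauto} to conclude $B\circ\phi = \gamma_1 B B_3$ and $S\circ\phi = \gamma_2 S S_3$, and then applying Theorem \ref{main} — but this merely duplicates work already done, so the specialization argument is the natural and efficient route.
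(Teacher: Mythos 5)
Your proposal is correct and is exactly how the paper obtains this result: the text preceding the corollary states it follows by taking $\theta_1=\theta_2$ in Theorem \ref{split}. Your additional check that the proof of Theorem \ref{split} imposes no hidden restriction when $\theta_1=\theta_2$ is a reasonable bit of diligence but not needed beyond the one-line specialization.
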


%For an arbitrary self map, the above corollary need not be true.
 The following example shows that
 the above corollary may fail, even if $\phi$ is an inner function.

\begin{example}\label{examp}
	Let $\theta=BS$, where $B(z)=-z$ and $S(z)=\exp(\frac{z+1}{z-1})$. Take $\phi=-\theta$.
 Then
	$$\dfrac{\theta\circ \phi}{\theta}=\dfrac{(BS) \circ \phi}{\theta}=
\dfrac{(B\circ \phi)(S\circ\phi)}{\theta}=S\circ\phi\in H^\infty.$$
	Thus, by Theorem \ref{main}, $C_{\phi}(\theta{H}^p)\subseteq \theta{H}^p$.
	But $$\tilde{\phi}(1)=\lim_{r\rightarrow1^-}\phi(r)=
\lim_{r\rightarrow1^-}r\exp\left(\frac{r+1}{r-1}\right)\neq1.$$ By \cite[Theorem 7]{cowen},
  $S{H}^p$ is not invariant under $C_{\phi}$.
\end{example}
\begin{theorem}\label{theorem.6}
	Let $\phi$ be a holomorphic self map on $\mathbb{D}$ and $B$ be any Blaschke product. Then $\theta$
is an inner function such that $C_{\phi}(\theta  H^p)\subseteq BH^p$ if and only if
$C_{\phi}(B_1H^p)\subseteq BH^p$, where $B_1$ is the Blaschke component in the inner
factorization of $\theta$.
\end{theorem}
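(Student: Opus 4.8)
The plan is to reduce the statement to Theorem~\ref{main} together with the multiplicity criterion of Proposition~\ref{charb}, using the single structural fact that a singular inner function is zero-free on $\mathbb{D}$. Write the inner factorization $\theta=B_1S_1$, where $B_1$ is the Blaschke component and $S_1$ is the singular component. Since $S_1$ is non-vanishing on $\mathbb{D}$ and $\phi$ maps $\mathbb{D}$ into $\mathbb{D}$, the function $S_1\circ\phi$ is a non-vanishing element of $H^\infty$; this is the only property of $S_1$ that will be needed, and it is precisely why only the Blaschke part of $\theta$ can influence membership of $\theta\circ\phi$ in $BH^p$.

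For the forward implication, suppose $C_\phi(\theta H^p)\subseteq BH^p$. By Theorem~\ref{main}, $(\theta\circ\phi)/B\in H^\infty$, so $\theta\circ\phi=Bf$ for some $f\in H^\infty$, whence $B$ divides $\theta\circ\phi=(B_1\circ\phi)(S_1\circ\phi)$ in $H^\infty$. Fix $w\in Z(B)$ with $mult_B(w)=m$; then $(z-w)^m$ divides $B$, hence divides $(B_1\circ\phi)(S_1\circ\phi)$, and since $S_1\circ\phi$ has no zero in $\mathbb{D}$, it follows that $(z-w)^m$ divides $B_1\circ\phi$, i.e. $mult_{B_1\circ\phi}(w)\ge m=mult_B(w)$. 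As $w\in Z(B)$ was arbitrary, Proposition~\ref{charb}, applied to the Blaschke products $B_1$ and $B$, gives $C_\phi(B_1H^p)\subseteq BH^p$. (Equivalently, this direction is a special case of Remark~\ref{zero}, with $\theta_1=\theta$ and $\theta_2=B$, whose singular component is a unimodular constant.)

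For the converse, suppose $C_\phi(B_1H^p)\subseteq BH^p$, that is, $B_1\in\mathcal{L}_{B,\phi}$. Since $S_1$ is an inner function, Lemma~\ref{pro} (with $\theta=B$, $\theta_1=B_1$ and $\theta_2=S_1$) yields $\theta=S_1B_1\in\mathcal{L}_{B,\phi}$, i.e. $C_\phi(\theta H^p)\subseteq BH^p$. No serious obstacle arises; the one point to keep in view is that the target subspace is induced by a pure Blaschke product, so its own singular component is trivial and the multiplicity count in Proposition~\ref{charb} lines up exactly with the hypothesis $C_\phi(B_1H^p)\subseteq BH^p$. The rest is bookkeeping with the inner factorization and the zero-freeness of singular inner functions.
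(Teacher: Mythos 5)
Your proof is correct and follows essentially the same route as the paper: the forward direction uses Theorem~\ref{main} to get $(\theta\circ\phi)/B\in H^\infty$, then exploits the non-vanishing of $S_1\circ\phi$ to push the zero multiplicities onto $B_1\circ\phi$ and invoke Proposition~\ref{charb}, while the converse is exactly the application of Lemma~\ref{pro}. No gaps; nothing further is needed.
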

\begin{proof}
	Suppose $\theta$ is an inner function such that $C_{\phi}(\theta H^p)\subseteq BH^p$.
Let $\theta=B_1S$ %, where $B_1$ is Blaschke product and $S$ is a singular inner function.
be its inner factorization. Then
by Theorem  \ref{main},
	$$\dfrac{(B_1\circ\phi)(S\circ \phi)}{B}=\dfrac{\theta \circ \phi}{B}\in H^{\infty}.$$
	Fix $w\in Z(B)$, let $ {mult}_B(w)=m$ so that $(z-w)^m$ divides $B$. Thus,
$(z-w)^m$ divides $(B_1\circ\phi)(S\circ \phi)$. Since $S\circ \phi$ is non-vanishing,
we have  $(z-w)^m$ divides $(B_1\circ\phi)$. By   Proposition \ref{charb}, we have
$C_{\phi}(B_1H^p)\subseteq BH^p$.
	The converse is true by Lemma \ref{pro}, that is if $C_{\phi}(B_1H^p)\subseteq BH^p$,
then $C_{\phi}(SB_1H^p)\subseteq BH^p$ for any singular inner function $S$.
\end{proof}
Using Proposition \ref{charb} and Theorem \ref{theorem.6}, we arrive at one of the main
results of this section.

\begin{theorem}
	For any self holomorphic function $\phi$ on $\mathbb{D}$ and any arbitrary Blaschke product $B$,
	$$\mathcal{L}_{B,\phi}=\{B_1S:{mult}_B(w) \leq {mult}_{B_1\circ\phi}(w)
\text{  for all } w\in Z(B) \text{ and  } S\text{ is singular}\}.$$
\end{theorem}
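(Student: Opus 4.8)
The plan is to combine the two preceding results directly, since the statement is essentially a bookkeeping consequence of Theorem \ref{theorem.6} and Proposition \ref{charb}. Write an arbitrary inner function $\theta$ in its inner factorization $\theta = B_1 S$ with $B_1$ a Blaschke product and $S$ a singular inner function. By definition, $\theta \in \mathcal{L}_{B,\phi}$ means exactly $C_{\phi}(\theta H^p) \subseteq B H^p$. First I would invoke Theorem \ref{theorem.6} to rephrase this: $C_{\phi}(\theta H^p) \subseteq B H^p$ holds if and only if $C_{\phi}(B_1 H^p) \subseteq B H^p$, where $B_1$ is precisely the Blaschke component of $\theta$; note that this equivalence places no constraint whatsoever on the singular component $S$.

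Next I would feed the condition $C_{\phi}(B_1 H^p) \subseteq B H^p$ into Proposition \ref{charb} (with $B_1$ playing the role of ``$B_1$'' and $B$ the role of ``$B_2$''), which translates it into the multiplicity inequality ${mult}_B(w) \leq {mult}_{B_1 \circ \phi}(w)$ for all $w \in Z(B)$. Chaining the two equivalences gives: $\theta = B_1 S \in \mathcal{L}_{B,\phi}$ if and only if ${mult}_B(w) \leq {mult}_{B_1 \circ \phi}(w)$ for all $w \in Z(B)$, with $S$ an arbitrary singular inner function. That is exactly the asserted description of the set $\mathcal{L}_{B,\phi}$. For completeness I would also spell out the reverse inclusion explicitly — given $B_1$ satisfying the multiplicity condition and any singular $S$, Proposition \ref{charb} yields $C_{\phi}(B_1 H^p) \subseteq B H^p$, and then Lemma \ref{pro} (with $\theta_2 = S$, $\theta_1 = B_1$, $\theta = B$) promotes this to $C_{\phi}(S B_1 H^p) \subseteq B H^p$, i.e. $B_1 S \in \mathcal{L}_{B,\phi}$ — so the set on the right is genuinely contained in $\mathcal{L}_{B,\phi}$ as well.

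There is no real obstacle here; the only point requiring a word of care is the uniqueness of the inner–outer (Blaschke–singular) factorization, so that ``the Blaschke component $B_1$ of $\theta$'' is well defined up to a unimodular constant, which does not affect any of the multiplicity counts or the spaces $B_1 H^p$. This was already recorded in the preliminaries, so the argument is short: it is the concatenation of Theorem \ref{theorem.6} and Proposition \ref{charb}, exactly as the sentence preceding the statement advertises. I would present it in two or three lines, emphasizing that the freedom in the singular factor $S$ comes entirely from the non-vanishing of $S \circ \phi$ exploited in the proof of Theorem \ref{theorem.6}.
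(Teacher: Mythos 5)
Your argument is exactly the one the paper intends: the theorem is stated as an immediate consequence of Theorem \ref{theorem.6} (reduction to the Blaschke component) combined with Proposition \ref{charb} (the multiplicity criterion), with Lemma \ref{pro} handling the arbitrary singular factor. The proposal is correct and matches the paper's approach.
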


\section{Special case: Singular Beurling subspaces}

In this section, we discuss the action of the composition operator between two Beurling
subspaces induced by singular inner functions. As an answer to the Question \ref{qn}, for
 two arbitrary singular inner functions $S_1$ and $S_2$, we find another characterization
 (measure-theoretic in nature) under  which $C_\phi(S_1H^p)\subseteq S_2H^p$, when $\phi $
 is an elliptic automorphism. Recall that
 a disk automorphism, other than identity, with a fixed point inside $\mathbb{D}$ is said to be an
 \textit{elliptic automorphism}.
Before we move to our discussion, let us recall some known results.
	\begin{theorem}\cite[Theorem 2.6.7]{texthardy}\label{innerf}
		Let $S_{\mu_1}$ and $S_{\mu_2}$ be two singular inner functions. Then,
$S_{\mu_1}H^p\subseteq S_{\mu_2}H^p$ if and only if
			 $\mu_2(E)\leq \mu_1(E)$ for every Borel subset $E$ of $\mathbb{T}$.
		
	\end{theorem}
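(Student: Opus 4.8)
The plan is to reduce the statement to the factorization theory of singular inner functions, using the machinery already developed in Section~3. First I would apply Corollary~\ref{inner} with $\phi$ the identity map (which is an inner function), $\theta_1=S_{\mu_1}$ and $\theta_2=S_{\mu_2}$: since $C_{\mathrm{id}}(S_{\mu_1}H^p)=S_{\mu_1}H^p$, the containment $S_{\mu_1}H^p\subseteq S_{\mu_2}H^p$ is equivalent to $\psi:=S_{\mu_1}/S_{\mu_2}$ being an inner function. (Alternatively, in the forward direction one can simply put $f\equiv1$ to get $S_{\mu_1}/S_{\mu_2}\in H^p$, observe that its radial limit has modulus $1$ a.e., and invoke Theorem~\ref{bdd}.) So the whole problem becomes: when is $\psi$ inner, and how does that translate into a relation between $\mu_1$ and $\mu_2$?

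For the forward direction, suppose $\psi=S_{\mu_1}/S_{\mu_2}$ is inner. From $S_{\mu_1}=\psi S_{\mu_2}$ and the fact that $S_{\mu_1}$ has no zeros in $\mathbb{D}$, the inner function $\psi$ cannot vanish anywhere in $\mathbb{D}$; by the classification of inner functions recalled in Section~2, a non-vanishing inner function is a singular inner function, so $\psi=\gamma S_{\nu}$ for some finite positive Borel measure $\nu$ on $\mathbb{T}$ that is singular with respect to Lebesgue measure, and some $\gamma\in\mathbb{T}$. Using the multiplicativity of the singular representation, $S_{\nu}S_{\mu_2}=S_{\nu+\mu_2}$ up to a unimodular constant, so $S_{\mu_1}$ and $S_{\nu+\mu_2}$ agree up to a unimodular constant. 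By the uniqueness of the measure in the singular inner factorization, $\mu_1=\nu+\mu_2$, whence $\mu_1(E)-\mu_2(E)=\nu(E)\ge 0$ for every Borel set $E\subseteq\mathbb{T}$.

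For the converse, assume $\mu_2(E)\le\mu_1(E)$ for every Borel $E\subseteq\mathbb{T}$. Then $\nu:=\mu_1-\mu_2$ is a well-defined finite positive Borel measure, and it is singular with respect to Lebesgue measure because $\mu_1$ and $\mu_2$ are: picking Borel sets of Lebesgue measure zero carrying $\mu_1$ and $\mu_2$ respectively and taking their union gives a Lebesgue null set carrying $\nu$. Then $S_{\mu_1}=\gamma S_{\nu}S_{\mu_2}$ for some $\gamma\in\mathbb{T}$, so $(S_{\mu_1})/(S_{\mu_2})=\gamma S_{\nu}\in H^{\infty}$; hence for every $f\in H^p$ we have $S_{\mu_1}f=S_{\mu_2}(\gamma S_{\nu}f)\in S_{\mu_2}H^p$, and the containment follows (one may also simply cite Theorem~\ref{main} at this point).

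The only genuinely delicate points are bookkeeping rather than substance: that the quotient of the two singular inner functions is non-vanishing (immediate, since $S_{\mu_1}$ has no zeros), that a non-vanishing inner function must be a singular inner function (this is precisely the inner factorization statement with trivial Blaschke factor), that the correspondence $\mu\mapsto S_{\mu}$ is injective modulo unimodular constants and turns addition of measures into multiplication of the associated functions, and that $\nu=\mu_1-\mu_2$ is again singular. All of these were recorded or are immediate from Section~2, so I do not anticipate a real obstacle; the main care is to get the identity-map reduction through Corollary~\ref{inner} stated correctly and to justify the singularity of $\mu_1-\mu_2$ cleanly.
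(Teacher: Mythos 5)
Your proof is correct. Note that the paper itself gives no proof of this statement---it is quoted from \cite[Theorem 2.6.7]{texthardy}---and your argument is essentially the standard one from that source: reduce to the quotient $S_{\mu_1}/S_{\mu_2}$ being inner (your identity-map reduction via Corollary \ref{inner}, or directly via Theorem \ref{bdd}), observe that a non-vanishing inner function is singular, and use that $\mu\mapsto S_{\mu}$ is injective up to unimodular constants and converts addition of singular measures into multiplication of the corresponding functions. All the bookkeeping points you flag (non-vanishing of the quotient, singularity of $\mu_1-\mu_2$) are handled correctly.
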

	
	\begin{theorem}\cite[Lemma 3.1]{matacheeigen}\label{matache}
		Let $ \phi$ be an automorphism on ${\mathbb{D}}$ and let $S_\mu$ be a
singular inner function. Then there exist a singular measure $\nu$ such
 that $S_{\mu}\circ \phi$ and $S_{\nu}$ divides each other, where $\nu$ is given by
		\begin{equation}\label{singular}
			\nu(E)=\int\limits_{\phi(E)}\dfrac{1-|\phi(0)|^2}{|t-\phi(0)|^2}d\mu(t) \qquad\qquad
		\end{equation}
		for each Borel subset $E$ of $\mathbb{T}$. In particular, $(S_{\mu}\circ \phi)H^p=S_{\nu}H^p$.
	\end{theorem}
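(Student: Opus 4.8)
The plan is to compute $S_{\mu}\circ\phi$ directly from the Herglotz--Riesz representation of a singular inner function and then identify the measure that appears. Write the automorphism as $\phi(z)=\lambda\,(a-z)/(1-\overline{a}z)$ with $\lambda\in\mathbb{T}$ and $a\in\mathbb{D}$, so that $\phi(0)=\lambda a$ and $|\phi(0)|=|a|$. Since $S_{\mu}(z)=\alpha\exp\!\big(-\int_{\mathbb{T}}\tfrac{t+z}{t-z}\,d\mu(t)\big)$, we have $S_{\mu}(\phi(z))=\alpha\exp\!\big(-\int_{\mathbb{T}}\tfrac{t+\phi(z)}{t-\phi(z)}\,d\mu(t)\big)$, so the whole problem reduces to rewriting, for each fixed boundary point $t\in\mathbb{T}$, the Herglotz kernel $z\mapsto(t+\phi(z))/(t-\phi(z))$ in terms of a kernel based at the point $s:=\phi^{-1}(t)\in\mathbb{T}$.

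The computational heart is the following pointwise identity. Using the elementary Möbius relation $\phi(u)-\phi(v)=\lambda\,(v-u)(1-|a|^2)/\big((1-\overline{a}u)(1-\overline{a}v)\big)$ together with $1-|\phi(z)|^2=(1-|a|^2)(1-|z|^2)/|1-\overline{a}z|^2$ and $|1-\overline{a}s|=|a-s|$ for $s\in\mathbb{T}$, applied with $u=s=\phi^{-1}(t)$ and $v=z$, a short computation (and the analogous one with $v=0$) yields
\begin{equation*}
\operatorname{Re}\frac{t+\phi(z)}{t-\phi(z)}=\frac{1-|\phi(z)|^2}{|t-\phi(z)|^2}
=\frac{1-|\phi(0)|^2}{|t-\phi(0)|^2}\cdot\frac{1-|z|^2}{|s-z|^2}
=\frac{1-|\phi(0)|^2}{|t-\phi(0)|^2}\,\operatorname{Re}\frac{s+z}{s-z}.
\end{equation*}
I expect this step to be the main obstacle: one must check that the Poisson-kernel scaling factor comes out \emph{exactly} as $(1-|\phi(0)|^2)/|t-\phi(0)|^2$, and this is precisely where the automorphism algebra is used. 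Both sides are real parts of functions holomorphic in $z$ on $\mathbb{D}$ — namely $(t+\phi(z))/(t-\phi(z))$ and $\frac{1-|\phi(0)|^2}{|t-\phi(0)|^2}\cdot\frac{s+z}{s-z}$ — so these two holomorphic functions differ by a purely imaginary constant $i\,c(t)$, and evaluating at $z=0$ gives $c(t)=\operatorname{Im}\frac{t+\phi(0)}{t-\phi(0)}$.

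Integrating this identity in $t$ against $d\mu$ and absorbing the imaginary constant $\gamma:=\int_{\mathbb{T}}c(t)\,d\mu(t)$, we get
\begin{equation*}
\int_{\mathbb{T}}\frac{t+\phi(z)}{t-\phi(z)}\,d\mu(t)
=\int_{\mathbb{T}}\frac{\phi^{-1}(t)+z}{\phi^{-1}(t)-z}\cdot\frac{1-|\phi(0)|^2}{|t-\phi(0)|^2}\,d\mu(t)+i\gamma .
\end{equation*}
Now change variables through the homeomorphism $\phi^{-1}\colon\mathbb{T}\to\mathbb{T}$: the finite positive Borel measure $d\rho(t)=\frac{1-|\phi(0)|^2}{|t-\phi(0)|^2}\,d\mu(t)$ pushes forward to the measure $\nu$ defined by $\nu(E)=\rho(\phi(E))=\int_{\phi(E)}\frac{1-|\phi(0)|^2}{|t-\phi(0)|^2}\,d\mu(t)$, so the integral on the right becomes $\int_{\mathbb{T}}\frac{s+z}{s-z}\,d\nu(s)$. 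Exponentiating gives $S_{\mu}\circ\phi=(\text{unimodular constant})\cdot S_{\nu}$; hence $S_{\mu}\circ\phi$ and $S_{\nu}$ divide one another in $H^{\infty}$, and in particular $(S_{\mu}\circ\phi)H^p=S_{\nu}H^p$.

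Finally I would check that $\nu$ is a (finite, positive) \emph{singular} measure, as required: the density $\frac{1-|\phi(0)|^2}{|t-\phi(0)|^2}$ is continuous and bounded above and below by positive constants on $\mathbb{T}$, so $\rho$ is singular with respect to Lebesgue measure whenever $\mu$ is, and since $\phi^{-1}$ restricted to $\mathbb{T}$ is a bi-Lipschitz homeomorphism it carries Lebesgue-null sets to Lebesgue-null sets; therefore the pushforward $\nu$ of the singular measure $\rho$ is again singular, with $\nu(\mathbb{T})\le\frac{1-|\phi(0)|^2}{(1-|\phi(0)|)^2}\,\mu(\mathbb{T})<\infty$. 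Everything beyond the kernel identity is bookkeeping (change of variables, absorbing a unimodular constant, and the measure-theoretic checks).
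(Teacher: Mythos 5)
The paper does not prove this statement at all: it is imported verbatim as Lemma~3.1 of the cited Matache paper, so there is no in-paper argument to compare against. Your blind proof is correct and complete. The kernel identity at its heart checks out: with $\phi(z)=\lambda(a-z)/(1-\overline{a}z)$ one has $1-|\phi(z)|^2=(1-|a|^2)(1-|z|^2)/|1-\overline{a}z|^2$ and $|\phi(s)-\phi(z)|^2=(1-|a|^2)^2|s-z|^2/\bigl(|1-\overline{a}s|^2|1-\overline{a}z|^2\bigr)$, whence $\dfrac{1-|\phi(z)|^2}{|t-\phi(z)|^2}=\dfrac{|1-\overline{a}s|^2}{1-|a|^2}\cdot\dfrac{1-|z|^2}{|s-z|^2}$, and the constant $|1-\overline{a}s|^2/(1-|a|^2)$ is exactly $(1-|\phi(0)|^2)/|t-\phi(0)|^2$ as you assert; the passage from equality of real parts to equality of the Herglotz integrals up to an imaginary constant, the pushforward identification of $\nu$, and the singularity/finiteness checks are all sound. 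This is the standard argument via conformal invariance of the Poisson kernel and is presumably the substance of the original proof being cited.
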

	
	\begin{theorem}\label{text}
		Let $\phi $ be an elliptic automorphism on ${\mathbb{D}}$ with $0$  as the unique fixed
 point in ${\mathbb{D}}$. Suppose $S_{\mu_1}$ and $S_{\mu_2}$ be two singular inner functions.
 Then $C_{\phi}(S_{\mu_1}H^p)\subseteq S_{\mu_2}H^p$
		if and only if $\mu_2(E)\leq\mu_1(\phi(E))$ for every Borel subset $E$ of $\mathbb{T}$.
	\end{theorem}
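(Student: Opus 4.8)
The plan is to reduce the statement to the three results just quoted, the key observation being that an elliptic automorphism whose unique interior fixed point is $0$ satisfies $\phi(0)=0$, which makes the measure $\nu$ of Theorem \ref{matache} collapse to a transported copy of $\mu_1$. First I would note that since $\phi$ is a disk automorphism, both $C_{\phi}$ and $C_{\phi^{-1}}$ map $H^p$ into $H^p$, so $\{f\circ\phi : f\in H^p\}=H^p$ (given $g\in H^p$, write $g=(g\circ\phi^{-1})\circ\phi$). Hence
$$C_{\phi}(S_{\mu_1}H^p)=\{(S_{\mu_1}\circ\phi)(f\circ\phi):f\in H^p\}=(S_{\mu_1}\circ\phi)H^p,$$
and therefore $C_{\phi}(S_{\mu_1}H^p)\subseteq S_{\mu_2}H^p$ is equivalent to $(S_{\mu_1}\circ\phi)H^p\subseteq S_{\mu_2}H^p$. (Alternatively one may quote Theorem \ref{main} and Corollary \ref{inner} to pass to the divisibility statement $S_{\mu_2}\mid S_{\mu_1}\circ\phi$; the two routes are interchangeable.)

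Next I would invoke Theorem \ref{matache}: there is a singular measure $\nu$ such that $(S_{\mu_1}\circ\phi)H^p=S_{\nu}H^p$, with
$$\nu(E)=\int_{\phi(E)}\frac{1-|\phi(0)|^2}{|t-\phi(0)|^2}\,d\mu_1(t)$$
for every Borel $E\subseteq\mathbb{T}$; here I use that an automorphism extends to a homeomorphism of $\overline{\mathbb{D}}$, so $\phi$ restricts to a homeomorphism of $\mathbb{T}$ and $\phi(E)$ is Borel whenever $E$ is. Since $0$ is the fixed point, $\phi(0)=0$, so for $t\in\mathbb{T}$ the integrand equals $(1-0)/|t|^2=1$, giving $\nu(E)=\mu_1(\phi(E))$. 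Thus the containment in question becomes $S_{\nu}H^p\subseteq S_{\mu_2}H^p$ with $\nu(E)=\mu_1(\phi(E))$.

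Finally, Theorem \ref{innerf} says $S_{\nu}H^p\subseteq S_{\mu_2}H^p$ holds if and only if $\mu_2(E)\le\nu(E)$ for every Borel $E\subseteq\mathbb{T}$; substituting $\nu(E)=\mu_1(\phi(E))$ yields exactly $\mu_2(E)\le\mu_1(\phi(E))$ for all such $E$, which is the claim. I do not anticipate a genuine obstacle; the only points needing a line of care are the Borel measurability of $\phi(E)$ (handled by the boundary behaviour of automorphisms) and the bookkeeping when replacing $S_{\mu_1}\circ\phi$ by $S_{\nu}$ — but since these inner functions divide each other they generate the same Beurling subspace, so the substitution into Theorem \ref{innerf} is clean.
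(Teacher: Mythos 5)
Your proof is correct. The forward direction is essentially the paper's: pass from $C_{\phi}(S_{\mu_1}H^p)\subseteq S_{\mu_2}H^p$ to $(S_{\mu_1}\circ\phi)H^p\subseteq S_{\mu_2}H^p$, apply Theorem \ref{matache} with $\phi(0)=0$ to get $\nu(E)=\mu_1(\phi(E))$, and finish with Theorem \ref{innerf}. Where you diverge is the converse: you observe that every link in this chain is a genuine equivalence --- $C_{\phi}(S_{\mu_1}H^p)=(S_{\mu_1}\circ\phi)H^p$ because $C_{\phi}$ and $C_{\phi^{-1}}$ are both bounded self-maps of $H^p$, $(S_{\mu_1}\circ\phi)H^p=S_{\nu}H^p$ is an equality of subspaces, and Theorem \ref{innerf} is an ``if and only if'' --- so both implications come out of the same argument. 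The paper instead proves the converse by a separate direct computation: writing $\phi(z)=\lambda z$, using the identity $\frac{\phi(t)+\phi(z)}{\phi(t)-\phi(z)}=\frac{t+z}{t-z}$ to show $\bigl|(S_{\mu_1}\circ\phi)(z)/S_{\mu_2}(z)\bigr|=\exp\bigl(-\int_{\mathbb{T}}\frac{1-|z|^2}{|t-z|^2}\,d(\mu_1(\phi)-\mu_2)(t)\bigr)\leq 1$, and then invoking Theorem \ref{main}. Your route is more economical and makes the logical structure transparent; the paper's computation is more self-contained for the converse (it does not lean on the reverse implication of Theorem \ref{innerf} or on the surjectivity of $C_{\phi}$ on $H^p$) and illustrates the Herglotz-integral technique explicitly. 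The two points you flag --- Borel measurability of $\phi(E)$ via the boundary homeomorphism, and replacing $S_{\mu_1}\circ\phi$ by $S_{\nu}$ because they generate the same Beurling subspace --- are exactly the right ones to check, and both are handled correctly.
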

	\begin{proof}
		Suppose	$C_{\phi}(S_{\mu_1}H^p)\subseteq S_{\mu_2}H^p$. That is,
 $(S_{\mu_1}\circ \phi)H^p\subseteq S_{\mu_2}H^p$.
		By Theorem \ref{matache},  there exist a singular measure $\nu$ such that
$(S_{\mu_1}\circ \phi)H^p=S_{\nu}H^p$. Therefore $S_\nu H^p\subseteq S_{\mu_2}H^p$.
 By Theorem \ref{innerf}, for each Borel subset $E$ of $\mathbb{T}$, $\mu_2(E)\le \nu(E)$.
  Since $\phi(0)=0$, from (\ref{singular}) we have $\nu(E)=\mu_1(\phi(E))$. Hence for
  each Borel subset $E$ of $\mathbb{T}$, we have $\mu_2(E)\leq \mu_1(\phi(E))$.
		
		Conversely, suppose that $\mu_1$ and $\mu_2$ be any two singular measures such that
$\mu_2(E)\leq\mu_1(\phi(E))$ for every Borel subset $E$ of $\mathbb{T}$.
Let $f=(S_{\mu_1}\circ\phi)/S_{\mu_2}.$ Since $\phi$ is an elliptic automorphism
 with  $0$ as a fixed point, we have $\phi(z)=\lambda z$, for some $\lambda\in \mathbb{T}$.
 It gives that $\frac{\phi(t)+\phi(z)}{\phi(t)-\phi(z)}=\frac{t+z}{t-z}$ for all $z$ and $t$.
 Thus for any $z\in \mathbb{D}$,
		\begin{equation*}
			\begin{split}
|f(z)|&=\left|\dfrac{\exp(-\int\limits_{\mathbb{T}}\frac{t+\phi(z)}{t-\phi(z)}d\mu_1(t))}
{\exp(-\int\limits_{\mathbb{T}}\frac{t+z}{t-z}d\mu_2(t))}\right|
=\left|\dfrac{\exp(-\int\limits_{\mathbb{T}}\frac{\phi(t)+\phi(z)}{\phi(t)-\phi(z)}d\mu_1(\phi(t)))}
{\exp(-\int\limits_{\mathbb{T}}\frac{t+z}{t-z}d\mu_2(t))}\right|\\
&=\left|\exp\left(-\int\limits_{\mathbb{T}}\frac{t+z}{t-z}d(\mu_1(\phi)-\mu_2)(t)\right)\right|\\
				%&=e^{Re\int\limits_{\mathbb{T}}\frac{t+z}{t-z}d(\mu_2-\mu_1(\phi))(t)}\\
&=\exp\left(-\int\limits_{\mathbb{T}}Re\frac{t+z}{t-z}d(\mu_1(\phi)-\mu_2)(t)\right)\\
				&=\exp\left(-\int\limits_{\mathbb{T}}\frac{1-|z|^2}{|t-z|^2}d(\mu_1(\phi)-\mu_2)(t)\right)\leq 1.
			\end{split}
		\end{equation*}
Note that as both the integrand and measure are nonnegative, the integral in the last line is nonnegative.
Since $f\in H^\infty$,
		by Theorem  \ref{main} we get that $C_{\phi}$ maps $S_{\mu_1}H^p$ into 
 $S_{\mu_2}H^p$.
	\end{proof}
	\begin{theorem}\label{conj}
		Let $\phi$ be an automorphism with the unique fixed point $\omega\in \mathbb{D}$ and let
$\psi=B_\omega \circ \phi \circ B_\omega$. Also let  $\nu_1$ and $\nu_2$ be the corresponding measures
of $S_{\mu_1}\circ B_\omega$ and $S_{\mu_2}\circ B_\omega$ respectively, as mentioned in  Theorem
\ref{matache}. Then the following are equivalent:
		\begin{enumerate}
			\item $C_{\phi}(S_{\mu_1}H^p)\subseteq S_{\mu_2}H^p$
			\item $C_{\psi}((S_{\mu_1}\circ B_\omega )H^p)\subseteq (S_{\mu_2}\circ B_\omega )H^p$
			\item $\nu_2(B_\omega( E))\leq\nu_1(B_\omega(\phi(E)))$ for every Borel subset $E$ of $\mathbb{T}$.
			
		\end{enumerate}
	\end{theorem}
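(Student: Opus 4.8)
The plan is to reduce all three equivalences to Theorem \ref{main}, Theorem \ref{matache} and Theorem \ref{text}, exploiting that $B_\omega$ is an involutive automorphism, $B_\omega\circ B_\omega=\mathrm{id}$, carrying $\omega$ to $0$. First I would record the basic geometric fact: an automorphism whose only fixed point lies in $\mathbb{D}$ is necessarily elliptic (a non-identity automorphism with an interior fixed point), so $\phi$ is elliptic, and hence $\psi=B_\omega\circ\phi\circ B_\omega$ is an automorphism with $\psi(0)=B_\omega(\phi(B_\omega(0)))=B_\omega(\phi(\omega))=B_\omega(\omega)=0$; being non-identity it is an elliptic automorphism with $0$ as its unique fixed point, so Theorem \ref{text} is applicable to $\psi$.

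For $(1)\Leftrightarrow(2)$: By Theorem \ref{main}, $(1)$ is equivalent to $(S_{\mu_1}\circ\phi)/S_{\mu_2}\in H^\infty$. Composition with the automorphism $B_\omega$ is an isometric bijection of $H^\infty$ onto itself, so this holds if and only if $\big((S_{\mu_1}\circ\phi)/S_{\mu_2}\big)\circ B_\omega\in H^\infty$. Using $B_\omega\circ B_\omega=\mathrm{id}$, the numerator rewrites as $(S_{\mu_1}\circ\phi)\circ B_\omega=(S_{\mu_1}\circ B_\omega)\circ(B_\omega\circ\phi\circ B_\omega)=(S_{\mu_1}\circ B_\omega)\circ\psi$, while the denominator is $S_{\mu_2}\circ B_\omega$. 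Since the composition of an inner function with a disk automorphism is inner, $S_{\mu_1}\circ B_\omega$ and $S_{\mu_2}\circ B_\omega$ are inner functions, and a second application of Theorem \ref{main} — now with these two inner functions and self map $\psi$ — identifies the condition $\big((S_{\mu_1}\circ B_\omega)\circ\psi\big)/(S_{\mu_2}\circ B_\omega)\in H^\infty$ with statement $(2)$.

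For $(2)\Leftrightarrow(3)$: By Theorem \ref{matache} applied to the automorphism $B_\omega$, one has $(S_{\mu_i}\circ B_\omega)H^p=S_{\nu_i}H^p$ for $i=1,2$ with $\nu_i$ singular, so $(2)$ is exactly the containment $C_\psi(S_{\nu_1}H^p)\subseteq S_{\nu_2}H^p$. Since $\psi$ is an elliptic automorphism with $0$ as its unique fixed point, Theorem \ref{text} says this is equivalent to $\nu_2(E)\le\nu_1(\psi(E))$ for every Borel subset $E$ of $\mathbb{T}$. Finally, $B_\omega$ restricts to a homeomorphism of $\mathbb{T}$, so $F\mapsto B_\omega(F)$ is a bijection of the Borel $\sigma$-algebra of $\mathbb{T}$ onto itself; substituting $E=B_\omega(F)$ and using $\psi\circ B_\omega=B_\omega\circ\phi\circ B_\omega\circ B_\omega=B_\omega\circ\phi$, the inequality $\nu_2(E)\le\nu_1(\psi(E))$ for all Borel $E$ becomes $\nu_2(B_\omega(F))\le\nu_1(B_\omega(\phi(F)))$ for all Borel $F$, which is statement $(3)$.

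The points needing only a line of justification are that composition with a disk automorphism preserves $H^\infty$ (with equal sup-norm), preserves inner functions, and preserves Borel subsets of $\mathbb{T}$, together with the fact that the set identity $(S_{\mu_i}\circ B_\omega)H^p=S_{\nu_i}H^p$ from Theorem \ref{matache} permits replacing $S_{\mu_i}\circ B_\omega$ by $S_{\nu_i}$ inside $C_\psi(\,\cdot\,)$. The one place where I would be most careful is the bookkeeping with the involution identity $B_\omega\circ B_\omega=\mathrm{id}$: it must be inserted in precisely the right spots so that $\psi$-conjugation turns $S_{\mu_1}\circ\phi$ into $(S_{\mu_1}\circ B_\omega)\circ\psi$ and turns $\psi(E)$ into $B_\omega(\phi(B_\omega^{-1}(E)))$; a stray $B_\omega$ on the wrong side would break the chain of equivalences.
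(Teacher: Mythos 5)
Your proposal is correct and follows essentially the same route as the paper: Theorem \ref{main} plus conjugation by the involution $B_\omega$ gives $(1)\Leftrightarrow(2)$, and Theorem \ref{matache} followed by Theorem \ref{text} applied to $\psi$ (with the Borel-set substitution $E=B_\omega(F)$) gives $(2)\Leftrightarrow(3)$. Your explicit checks that $\psi$ is an elliptic automorphism fixing $0$ and that $B_\omega$ permutes the Borel subsets of $\mathbb{T}$ are details the paper leaves implicit, but the argument is the same.
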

	\begin{proof}
		Using Theorem \ref{main}, we have
		\begin{equation*}
			\begin{split}
				C_{\phi}(S_{\mu_1}H^p)\subseteq S_{\mu_2}H^p&\Leftrightarrow  S_{\mu_1}\circ \phi=
S_{\mu_2}f \text { for some } f\in H^\infty\\
				&\Leftrightarrow  S_{\mu_1}\circ B_\omega  \circ \psi=(S_{\mu_2}f) \circ B_\omega \,\,
\big(=(S_{\mu_2}\circ B_\omega ) (f\circ B_\omega )\big)\\
				&\Leftrightarrow  C_{\psi}((S_{\mu_1}\circ B_\omega )H^p)\subseteq (S_{\mu_2}\circ B_\omega)H^p.\\
			\end{split}
		\end{equation*}
		This completes the proof of $(1) \Leftrightarrow(2)$.
		
		For $(2)\Leftrightarrow(3)$, using Theorem \ref{text}, we have
		\begin{equation*}
			\begin{split}
				C_{\psi}((S_{\mu_1}\circ B_\omega  )H^p)\subseteq (S_{\mu_2}\circ B_\omega  )H^p
&\Leftrightarrow C_{\psi}(S_{\nu_1}H^p)\subseteq S_{\nu_2}H^p\\
				&\Leftrightarrow   \nu_2(E)\leq\nu_1(\psi(E))\\
				&\Leftrightarrow \nu_2(B_\omega ( E))\leq\nu_1(\psi(B_\omega (E)))=\nu_1(B_\omega (\phi(E))),\\
		%		&\Leftrightarrow \nu_2(B_\omega ( E))\leq\nu_1(B_\omega (\phi(E))),\\
			\end{split}
		\end{equation*}
		for all Borel subsets $E$ of $\mathbb{T}$, where $\nu_i(E)=\int_{\phi(E)}\frac{1-|\omega |^2}
{|\omega -t|^2}d\mu_i(t)$, for $i=1,2$.	
	\end{proof}

 Note that every function $f$ in $H^p$ can be factored (inner-outer factorization \cite[Theorem 2.8]{duren}) as $f=BSg$,
  where $B$ is a Blaschke product, $S$ is a singular inner function
 and $g$ is an outer function. It is important to note that these components are unique up
 to unimodular constants.

	\begin{theorem}
		Let $\phi$ be a self holomorphic map on $\mathbb{D}$ and let $S_{\mu_1}$ and $S_{\mu_2}$ be
 two singular inner functions. Then $C_{\phi}(S_{\mu_1}H^p)\subseteq S_{\mu_2}H^p$ if and only if
  $S_{\mu_2}$ divides singular part of $S_{\mu_1}\circ\phi$.
	\end{theorem}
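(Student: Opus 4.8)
The plan is to deduce this characterization directly from Theorem \ref{main} by unwinding what it means for the quotient $(S_{\mu_1}\circ\phi)/S_{\mu_2}$ to lie in $H^{\infty}$, using the uniqueness of the inner--outer factorization. First I would observe that $S_{\mu_1}\circ\phi$ is a bounded holomorphic function on $\mathbb{D}$ that is not identically zero: it is bounded by $1$, and since the singular inner function $S_{\mu_1}$ omits the value $0$, the composition is either non-vanishing or a nonzero constant. Hence it admits an inner--outer factorization $S_{\mu_1}\circ\phi=B\,S\,h$ with $B$ a Blaschke product, $S$ a singular inner function, and $h$ outer; by definition $S$ is the singular part of $S_{\mu_1}\circ\phi$.

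For the forward implication I would start from Theorem \ref{main}, which gives $f:=(S_{\mu_1}\circ\phi)/S_{\mu_2}\in H^{\infty}$, take the inner--outer factorization $f=B_f S_f h_f$, and substitute into $S_{\mu_1}\circ\phi=S_{\mu_2}f$ to obtain $S_{\mu_1}\circ\phi=B_f\,(S_{\mu_2}S_f)\,h_f$. Since a product of singular inner functions is singular inner (the associated measures add), this is an inner--outer factorization of $S_{\mu_1}\circ\phi$, so by uniqueness up to unimodular constants its singular part $S$ agrees with $S_{\mu_2}S_f$ up to such a constant; in particular $S_{\mu_2}$ divides $S$. For the converse, assuming $S=S_{\mu_2}S_3$ with $S_3\in H^{\infty}$ (which is automatically singular inner, since $S$ has trivial Blaschke factor), I would simply compute $(S_{\mu_1}\circ\phi)/S_{\mu_2}=B\,S_3\,h\in H^{\infty}$ and invoke Theorem \ref{main} again to conclude $C_{\phi}(S_{\mu_1}H^p)\subseteq S_{\mu_2}H^p$.

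I do not expect a genuine obstacle here; the argument is essentially bookkeeping with the canonical factorization. The only points needing a word of care are justifying that $S_{\mu_1}\circ\phi\not\equiv 0$ so that an inner--outer factorization is available, and recording that sums of the associated singular measures correspond to products of singular inner functions so that the factors produced really are of the required type; both are routine given Theorem \ref{RFT} and the discussion of inner--outer factorization recalled just before the statement.
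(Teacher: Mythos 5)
Your proposal is correct and follows essentially the same route as the paper: apply Theorem \ref{main} to get $(S_{\mu_1}\circ\phi)/S_{\mu_2}\in H^{\infty}$, then compare singular parts via the uniqueness of the inner--outer factorization, with the converse being the same routine computation the paper dismisses as trivial. Your version merely spells out the bookkeeping (non-vanishing of $S_{\mu_1}\circ\phi$, additivity of singular measures under multiplication) that the paper leaves implicit.
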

	\begin{proof}
		Let $S_{\gamma}$ be the singular inner component of $S_{\mu_1}\circ\phi$. 
Now, let us assume
that
$C_{\phi}(S_{\mu_1}H^p)\subseteq S_{\mu_2}H^p$. Then by Theorem \ref{main},
$f=(S_{\mu_1}\circ\phi)/S_{\mu_2}\in H^{\infty}$. As $fS_{\mu_2}=S_{\mu_1}\circ\phi$,
comparing singular part on both sides of the equation and by the uniqueness of inner-outer
 factorization, we have $S_{\mu_2}$ divides $S_{\gamma}$.
 The converse part is trivial.
	\end{proof}

\section{Invariant Beurling subspaces of composition operators}
In this section, for given a Blaschke Beurling subspace we will try to find all
composition operators which makes it invariant. We have given two characterization for $C_\phi(BH^p)\subseteq BH^p$
in Section 3 (see Theorem \ref{main} and Proposition \ref{charb}). The first one is in
terms of $H^\infty$ functions and the next one is in terms of multiplicities of zeros of $B$.
 In Theorem \ref{main1}, we give a third characterization for $BH^p$ to be invariant under $C_\phi$ in
terms of  derivatives of $\phi$ at the zeros of $B$.
\begin{remark}\label{rem}
	Let $\theta$ be an inner function. If
	$C_{\phi}(\theta H^p)\subseteq\theta H^p$, then $\phi$ maps $Z(\theta)$ into itself
(see Section 3).% in \cite{buerlingtype}).
\end{remark}

It is natural to ask whether the converse of the above remark is true.
The answer is negative in general. %The converse of the Remark \ref{rem} is true
 If the multiplicities of all the zeros of the Blaschke product are the same, then the answer is positive.
 %Here, we will give some conditions in terms of $\phi$ under which the converse is true
% when $\theta$ is a Blaschke product.
%
%
%
%
%For the Blaschke product, we have the following characterization from \cite[Corollary 2.4]{buerlingtype}:
%\begin{theorem} \label{cor4}	
%	Let $\phi$ be a holomorphic self map on $\mathbb{D}$ and let $B$ be a Blaschke product.
%Then the following are equivalent:
%	\begin{enumerate}
%		\item[(i)] $C_{\phi}(BH^p)\subseteq BH^p$.
%		\item[(ii)] ${ mult}_{B}(w)\leq {mult}_{B\circ \phi}(w)~ \text{for all}~ w\in Z(B)$.
%		
%	\end{enumerate}
%\end{theorem}

\begin{proposition}\label{necc}
	Let $B= \gamma\prod _{i\in\mathbb{N}}(\alpha_iB_{a_i})^{m}$ be an
	arbitrary Blaschke product with all of its zeros have the same multiplicity (say m).
	Then $C_{\phi}(BH^p)\subseteq B H^p$ if and only if  $\phi(\{a_i\}_{i\in\mathbb{N}})
\subseteq \{a_i\}_{i\in\mathbb{N}}$.
\end{proposition}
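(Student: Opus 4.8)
The plan is to prove the two implications separately. The forward implication is immediate from what was already recorded: if $C_{\phi}(BH^p)\subseteq BH^p$, then Remark \ref{rem} (equivalently, the discussion preceding Theorem \ref{main} applied with $\theta_1=\theta_2=B$ and $f\equiv 1$, which gives $B\circ\phi=gB$ for some $g$, hence $Z(B)\subseteq Z(B\circ\phi)$) tells us that $\phi$ maps $Z(B)$ into $Z(B)$. Since $Z(B)=\{a_i\}_{i\in\mathbb{N}}$, this is exactly the assertion $\phi(\{a_i\}_{i\in\mathbb{N}})\subseteq\{a_i\}_{i\in\mathbb{N}}$, so no work is needed here.

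For the converse, the plan is to invoke Proposition \ref{charb}: it suffices to check that ${mult}_{B}(w)\le {mult}_{B\circ\phi}(w)$ for every $w\in Z(B)$, and since every zero of $B$ has multiplicity $m$, this reduces to showing ${mult}_{B\circ\phi}(a_j)\ge m$ for each $j$. I would expand $B\circ\phi=\gamma\prod_{i\in\mathbb{N}}(\alpha_i B_{a_i}\circ\phi)^{m}$; the product converges locally uniformly on $\mathbb{D}$ because $\phi$ sends compact subsets of $\mathbb{D}$ into compact subsets of $\mathbb{D}$ and $B=\gamma\prod_{i}(\alpha_iB_{a_i})^{m}$ converges locally uniformly. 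Fixing $j$, by hypothesis $\phi(a_j)=a_k$ for some $k$, so $(B_{a_k}\circ\phi)(a_j)=B_{a_k}(a_k)=0$; hence the single factor $(\alpha_kB_{a_k}\circ\phi)^{m}$ already has a zero of order at least $m$ at $a_j$. All the other factors $(\alpha_iB_{a_i}\circ\phi)^{m}$ with $i\ne k$ are nonzero at $a_j$, since $\phi(a_j)=a_k\ne a_i$ and the $a_i$'s are distinct, and the product of these factors is holomorphic and nonvanishing near $a_j$. Therefore ${mult}_{B\circ\phi}(a_j)\ge m$, and Proposition \ref{charb} gives $C_{\phi}(BH^p)\subseteq BH^p$.

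I do not expect a genuine obstacle here; the only real point is the observation that the \emph{entire} $m$-th power $(B_{a_k}\circ\phi)^{m}$ is a factor of $B\circ\phi$, so a single simple vanishing of $B_{a_k}\circ\phi$ at $a_j$ is enough to produce the full required multiplicity $m$. This is precisely where the equal-multiplicity hypothesis is used: with unequal multiplicities, $\phi$ could send a zero $a_j$ of large multiplicity $m_j$ to a zero $a_k$ of smaller multiplicity $m_k$, and $(B_{a_k}\circ\phi)^{m_k}$ would contribute only $m_k<m_j$, so the converse of Remark \ref{rem} would fail. One degenerate case deserves a line: if $\phi$ is constant, then the hypothesis forces $\phi\equiv a_j$ for some $j$, whence $B\circ\phi\equiv 0$ and $C_{\phi}(BH^p)=\{0\}\subseteq BH^p$ trivially (consistent with reading the multiplicity of the zero function as $+\infty$).
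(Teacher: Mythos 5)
Your proof is correct and follows essentially the same route as the paper: forward direction via Remark \ref{rem}, converse via Proposition \ref{charb} by noting that $\phi(a_j)=a_k$ makes the entire factor $(\alpha_kB_{a_k}\circ\phi)^{m}$ vanish to order at least $m$ at $a_j$ (the paper phrases this by extracting $B_{a_j}$ from $\alpha_kB_{a_k}\circ\phi$ via Riesz factorization, but the content is identical). The extra remarks on where equal multiplicities are used and on the constant-$\phi$ degenerate case are fine but not needed.
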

\begin{proof}
	The necessary part easily follows from the Remark \ref{rem}. For the sufficient part,
consider any $a_ j\in Z(B)$. If $\phi(a_j)=a_k$ for some $a_k\in Z(B)$ then as a consequence of Theorem \ref{RFT}, $\alpha_kB_{a_k} \circ \phi=B_{a_j}g$ for
 some $g\in H^{\infty}$.
	Thus, $$ B\circ \phi=(\alpha_kB_{a_k}\circ \phi)^m h=B_{a_j}^mg^mh,$$ where
$h=\gamma\prod_{i\in\mathbb{N}, i\neq k}(\alpha_iB_{a_i}\circ \phi)^m$.
	Therefore, $mult_{B\circ\phi}(a_j)\geq m=mult_{B}(a_j)$. Hence, by Proposition
\ref{charb}, $C_{\phi}(BH^p)\subseteq B H^p$.
\end{proof}

The assumption  on multiplicities in Proposition \ref{necc} is essential.
 To highlight this, we have the following example.

\begin{example}
	Consider the Blaschke product  $B=\gamma \prod_{i\in\mathbb{N}}(\alpha_iB_{a_i})^{m_i}$.
Suppose $m_k< m_j$  for some  $k, j\in \mathbb{N}$. Take $\phi=B_{a_k}\circ B'$,
 where $B'=\prod_{i\in\mathbb{N}}\alpha_iB_{a_i}$.
 Then
	$\phi(a_i)=a_k$ for all $i\in \mathbb{N}$ and
	\begin{equation*}
		\begin{split}
			B\circ \phi&=B\circ B_{a_k}\circ B'
			=\gamma \prod_{i\in\mathbb{N}}(\alpha_iB_{a_i}\circ B_{a_k}\circ B' )^{m_i}\\
			&=\gamma(\alpha_kB_{a_k}\circ B_{a_k}\circ B' )^{m_k}\prod_{i\in\mathbb{N}, i\neq k}
(\alpha_i B_{a_i}\circ B_{a_k}\circ B' )^{m_i}\\
			&=\gamma\alpha_k^{m_k}\prod_{i\in\mathbb{N}}(\alpha_iB_{a_i})^{m_k}
\prod_{i\in\mathbb{N}, i\neq k}(\alpha_iB_{a_i}\circ B_{a_k}\circ B' )^{m_i}.\\	
		\end{split}
	\end{equation*}
	The last equality follows from the fact that
$B_{a_k}\circ B_{a_k}$ is identity.
	Since the second product $\prod_{i\in\mathbb{N}, i\neq k}
(\alpha_iB_{a_i}\circ B_{a_k}\circ B' )^{m_i}$ is nonzero at $a_j$, it does not have the factor
 $B_{a_j}$. Thus, the multiplicity of ${a_j}$ in $B\circ \phi$ is $m_k$,
  which is strictly less than $m_j$.
  Therefore, $C_{\phi}(BH^p)\nsubseteq BH^p$ by Proposition \ref{charb}.
  Hence, if the multiplicities of any two zeros are different then there exists
  $\phi\in \mathcal{S}$ such that $\phi(\{a_i\}_{i\in \mathbb{N}})
  \subseteq\{a_i\}_{i\in \mathbb{N}}$  and
    $BH^p$ is not invariant under $C_{\phi}$.
\end{example}

	For any inner function $\theta$,  the collection of all $\phi\in \mathcal{S}$
such that $C_{\phi}(\theta H^p)\subseteq\theta H^p$ is denoted by
	$\mathcal{L}_{\theta}$.
For any inner function $\theta $, $\mathcal{L}_{\theta}$ is always nonempty since the composition
under the identity map on $\mathbb{D}$ makes the subspace $\theta H^p$ invariant. We will refer to
the identity map as the trivial map. We are interested in the existence of a nontrivial element in
$\mathcal{L}_{\theta}$. For any Blaschke product, the following result will give us a partial answer.
\begin{theorem}
	Let $B=\gamma \prod_{i\in\mathbb{N}}(\alpha_iB_{a_i})^{m_i}$ be a Blaschke product  with
$\max_{i\in\mathbb{N}}\{m_i\}$ is finite. Then there exists a nontrivial map $\phi$
so that $BH^p$ is invariant under $C_{\phi}$.
\end{theorem}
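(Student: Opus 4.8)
The plan is to exhibit an explicit non-identity inner self-map $\phi$ of $\mathbb{D}$ for which $C_\phi(BH^p)\subseteq BH^p$, and to verify this through the multiplicity criterion of Proposition \ref{charb}. Two degenerate cases are disposed of at once: if $B$ is a unimodular constant then $BH^p=H^p$ and any non-identity $\phi$ (say $\phi(z)=z^2$) works; and if $B$ has a single distinct zero $a_1$ then any non-identity elliptic automorphism $\phi$ fixing $a_1$ works, since $\phi(a_1)=a_1$ already forces $mult_{B\circ\phi}(a_1)\ge m_1=mult_B(a_1)$. So I assume $B$ has at least two distinct zeros, fix one of them, call it $a_1$ with $m_1=mult_B(a_1)$, and set
\[
\phi:=B_{a_1}\circ B.
\]
As a composition of inner functions $\phi\in\mathcal{S}$. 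It is not the identity: since $B_{a_1}$ is injective with $B_{a_1}(0)=a_1$, the equation $\phi(z)=a_1$ holds precisely when $B(z)=0$, so $\phi$ carries every zero of $B$ to the single point $a_1$; picking two distinct zeros $a_1\ne a_2$ gives $\phi(a_1)=\phi(a_2)=a_1$, so $\phi$ is not injective.

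Next I would compute $mult_{B\circ\phi}(a_j)$ at each zero $a_j$ of $B$. A direct M\"obius simplification gives
\[
\phi-a_1=B_{a_1}\circ B-B_{a_1}(0)=-(1-|a_1|^2)\,\frac{B}{1-\overline{a_1}B},
\]
and since $B$ vanishes to order exactly $m_j$ at $a_j$ while $1-\overline{a_1}B\to 1\ne 0$ there, the function $\phi-a_1$ vanishes to order exactly $m_j$ at $a_j$ (in particular $\phi(a_j)=a_1$). Writing $B(w)=(w-a_1)^{m_1}b(w)$ with $b$ holomorphic on $\mathbb{D}$ and $b(a_1)\ne 0$, we get on all of $\mathbb{D}$ the identity $(B\circ\phi)(z)=(\phi(z)-a_1)^{m_1}\,b(\phi(z))$, with $b(\phi(z))\to b(a_1)\ne 0$ as $z\to a_j$; hence
\[
mult_{B\circ\phi}(a_j)=m_1 m_j\ \ge\ m_j=mult_B(a_j)\qquad\text{for every }j,
\]
because $m_1\ge1$. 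By Proposition \ref{charb} (applied with $B_1=B_2=B$) this yields $C_\phi(BH^p)\subseteq BH^p$, exhibiting the desired nontrivial $\phi\in\mathcal{L}_B$.

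There is no serious obstacle: the construction is the whole content, and once $\phi$ is chosen the verification is just the short order computation above; the only places needing attention are the two elementary degenerate cases and the check that $\phi$ differs from the identity. I would also note that this argument never uses the hypothesis $\max_i m_i<\infty$ -- the map $B_{a_1}\circ B$ works for an arbitrary Blaschke product with at least two distinct zeros, so the theorem in fact holds without that restriction. (If one instead wants a $\phi$ built only from the zero set of $B$, for instance $\phi=B_{a_1}\circ B^{\flat}$ with $B^{\flat}=\prod_{i\in\mathbb{N}}\alpha_iB_{a_i}$ the Blaschke product having the same zeros as $B$ but all simple, then finiteness of $M:=\max_i m_i$ does become relevant: one takes $a_1$ to be a zero of maximal multiplicity $M$, whose existence is precisely what finiteness of $M$ guarantees, and the analogous computation gives $mult_{B\circ\phi}(a_j)=M\ge m_j$.)
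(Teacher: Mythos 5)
Your proof is correct and uses essentially the same construction as the paper: the paper also takes $\phi=B_{a_k}\circ B$ and verifies $C_\phi(BH^p)\subseteq BH^p$ via Proposition \ref{charb}, the only difference being that it chooses $a_k$ to be a zero of maximal multiplicity, which is where the hypothesis $\max_i m_i<\infty$ enters. Your observation that this hypothesis is superfluous is right: for any choice of zero $a$ one gets $mult_{B\circ(B_a\circ B)}(a_j)=mult_B(a)\cdot m_j\ge m_j$, exactly as your M\"obius computation shows, and indeed the paper's own verification never actually uses maximality of $m_k$. Your explicit check that $\phi$ is nontrivial, together with the two degenerate cases, is also a genuine (if minor) improvement -- in the edge case $B=B_{a_1}$ with a single simple zero the map $B_{a_1}\circ B$ \emph{is} the identity, so some such case analysis is needed and the paper elides it.
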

\begin{proof}
	Let $m_k=\max_{i\in\mathbb{N}}\{m_i\}$. Define $\phi=B_{a_k}\circ B$. Then
	\begin{equation*}
		\begin{split}
			B\circ \phi&=B\circ B_{a_k}\circ B
			=\gamma\prod_{i\in\mathbb{N}}(\alpha_iB_{a_i}\circ B_{a_k}\circ B )^{m_i}\\
			&=\gamma(\alpha_kB_{a_k}\circ B_{a_k}\circ B )^{m_k}
\prod_{i\in\mathbb{N}, i\neq k}
(\alpha_iB_{a_i}\circ B_{a_k}\circ B )^{m_i}\\
			&=\gamma (\alpha_kB)^{m_k}\prod_{i\in\mathbb{N}, i\neq k}(\alpha_iB_{a_i}\circ
B_{a_k}\circ B )^{m_i}.\\	
		\end{split}
	\end{equation*}
	Thus, we can easily see that ${mult}_B(a_i)\leq mult_{B\circ \phi}(a_i)$ for all
$i\in \mathbb{N}$. Hence by Proposition \ref{charb}, we see that $\phi\in \mathcal{L}_B$.
\end{proof}
\begin{corollary}
	If $B$ is a finite Blaschke product then $\mathcal{L}_B$ is nontrivial.
\end{corollary}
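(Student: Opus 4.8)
The plan is to obtain this as an immediate consequence of the theorem stated just above. First I would observe that a finite Blaschke product $B$ has only finitely many distinct zeros in $\mathbb{D}$, say $a_1,\dots,a_N$; writing $B=\gamma\prod_{i=1}^{N}(\alpha_iB_{a_i})^{m_i}$ in the standard form used throughout the paper, each multiplicity $m_i$ is a positive integer and the degree of $B$ equals $\sum_{i=1}^{N} m_i<\infty$. In particular $\max_{1\le i\le N}\{m_i\}$ is the maximum of a finite set of positive integers and is therefore finite, so the hypothesis of the preceding theorem is met.

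Applying that theorem, there exists a nontrivial $\phi\in\mathcal{S}$ with $C_{\phi}(BH^p)\subseteq BH^p$; that is, $\mathcal{L}_B$ contains a map other than the identity, and hence $\mathcal{L}_B$ is nontrivial, which is exactly the assertion.

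The only point that warrants a separate word is the degenerate situation in which $B$ has no zeros, i.e.\ $B$ is a unimodular constant. Then $BH^p=H^p$, and since $C_{\phi}$ carries $H^p$ into $H^p$ for every holomorphic self map $\phi$ of $\mathbb{D}$ (as noted in the Introduction), we get $\mathcal{L}_B=\mathcal{S}$, which is plainly nontrivial. I do not foresee any genuine obstacle here: the corollary is a bookkeeping consequence of the theorem, and the only care required is to record that the finiteness of $\max_i\{m_i\}$ holds for any finite Blaschke product and to dispose of the zero-free (constant) case directly.
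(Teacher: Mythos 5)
Your argument is correct and is exactly the route the paper intends: a finite Blaschke product has finitely many distinct zeros, so $\max_i\{m_i\}$ is finite and the preceding theorem applies directly. Your extra remark disposing of the zero-free (constant) case, where $BH^p=H^p$ and every self map works, is a sensible bit of care that the paper leaves implicit.
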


\begin{theorem}\label{auto}
	Let $B=\gamma \prod_{i\in\mathbb{N}}(\alpha_iB_{a_i})^{m_i}$ be Blaschke product and
let $\phi\in \mathcal{L}_B$ be a disk automorphism. Then $mult_B(a_j)\leq mult_B(\phi (a_j))$
 for all $j\in \mathbb{N}$.
\end{theorem}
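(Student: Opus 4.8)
The plan is to derive the inequality from the two characterizations of membership in $\mathcal{L}_B$ that are already at hand, together with one elementary observation special to automorphisms. Since $\phi\in\mathcal{L}_B$, i.e. $C_\phi(BH^p)\subseteq BH^p$, Remark \ref{rem} tells us that $\phi$ maps $Z(B)$ into itself; in particular $\phi(a_j)=a_k$ for some $k\in\mathbb{N}$. The key extra input is that a disk automorphism $\phi$ satisfies $\phi'(z)\neq0$ for every $z\in\mathbb{D}$, so composition with $\phi$ neither creates nor destroys order of vanishing: for every $w\in\mathbb{D}$ one has $mult_{B\circ\phi}(w)=mult_B(\phi(w))$.

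To justify this identity I would argue locally. Writing $B(\zeta)=(\zeta-a_k)^{m_k}u(\zeta)$ near $a_k$ with $u$ holomorphic and $u(a_k)\neq0$, and using $\phi(z)-a_k=(z-a_j)\bigl(\phi'(a_j)+O(z-a_j)\bigr)$ near $a_j$, we obtain $B(\phi(z))=(z-a_j)^{m_k}\widetilde u(z)$ with $\widetilde u(a_j)=(\phi'(a_j))^{m_k}\,u(a_k)\neq0$. Hence $mult_{B\circ\phi}(a_j)=m_k=mult_B(\phi(a_j))$. (If instead $\phi(w)\notin Z(B)$, both sides vanish, so the identity holds for all $w$; alternatively one may just note that $B\circ\phi$ is itself a Blaschke product by Proposition \ref{compauto}, so all the multiplicities involved are finite.)

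Finally I would invoke Proposition \ref{charb}: since $C_\phi(BH^p)\subseteq BH^p$, we have $mult_B(w)\le mult_{B\circ\phi}(w)$ for every $w\in Z(B)$, in particular for $w=a_j$. Combining this with the identity above gives $mult_B(a_j)\le mult_{B\circ\phi}(a_j)=mult_B(\phi(a_j))$, which is exactly the claim. Equivalently, one can bypass Proposition \ref{charb} and work directly from Theorem \ref{main}: $B\circ\phi=Bf$ for some $f\in H^\infty$, so comparing orders of vanishing at $a_j$ yields $mult_B(\phi(a_j))=mult_{B\circ\phi}(a_j)=mult_B(a_j)+mult_f(a_j)\ge mult_B(a_j)$.

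There is no real obstacle here; the only step needing a word of care is the order-preservation identity, and that rests solely on $\phi'\neq0$ — precisely the feature that distinguishes automorphisms from the general self-maps for which, as the example after Proposition \ref{necc} shows, the analogous statement can fail.
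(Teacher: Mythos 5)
Your proof is correct and follows essentially the same route as the paper's: both arguments reduce to showing that $mult_{B\circ\phi}(a_j)$ equals exactly $m_k=mult_B(\phi(a_j))$ and then invoke Proposition \ref{charb}. The only difference is in how that multiplicity identity is obtained: the paper uses the group structure of automorphisms to write $\alpha_k B_{a_k}\circ\phi=\lambda B_{a_j}$ with $\lambda\in\mathbb{T}$, whereas your local factorization uses only that $\phi$ is injective near $a_j$ with $\phi^{(1)}(a_j)\neq 0$ --- which is precisely the weaker hypothesis under which the paper later observes (in the remark following Lemma \ref{phi}) that Theorem \ref{auto} still holds, so your argument in fact proves that slightly stronger version directly.
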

\begin{proof}
	Let $\phi\in \mathcal{L}_B$ be a disk automorphism. Fix  $j\in \mathbb{N}$ and suppose
$\phi(a_j)=a_k$  for some
	$k\in \mathbb{N}$. Then, $(\alpha_kB_{a_k}\circ\phi)(a_j)=0$. Since both $\phi$ and
 $\alpha_kB_{a_k}$ are disk automorphisms, we have $\alpha_kB_{a_k}\circ \phi=\lambda B_{a_j}$
 for some $\lambda\in \mathbb{T}$. Now,
	\begin{equation*}
		\begin{split}
			B\circ \phi=\gamma\prod_{i\in\mathbb{N}}(\alpha_i B_{a_i}\circ \phi)^{m_i}
			&=\gamma( \alpha_kB_{a_k}\circ \phi )^{m_k}\prod_{i\in\mathbb{N}, i\neq k}(\alpha_iB_{a_i}\circ \phi )^{m_i}\\
			&=\gamma' B_{a_j}^{m_k}\prod_{i\in\mathbb{N}, i\neq k}(\alpha_iB_{a_i}\circ \phi )^{m_i},\\
		\end{split}
	\end{equation*}
	where $\gamma'=\gamma \lambda^{m_k}$.
	Since $(\alpha_iB_{a_i}\circ \phi)(a_j)\neq 0$ for any $i\neq k$, $mult_{B\circ \phi}(a_j)=m_k$,
 and by Proposition \ref{charb}, we get $m_j\leq m_k$.
\end{proof}

By the above theorem, it is easy to observe that if any $\phi \in  \mathcal{L}_B$ maps some zero
of $B$ with greater multiplicity to some other zero of $B$ with lower multiplicity then $\phi$
cannot be a disk automorphism.

\begin{corollary}\label{corrr}
	Let $B=\gamma\prod_{i=1}^{n}B_{a_i}^{m_i}$, $n>1$ and without loss of generality,
let $m_1\leq m_2 \leq \cdots \leq m_n$. If $m_{n-2}<m_{n-1}<m_n$ ($m_1<m_2$ in case $n=2$), then  $\mathcal{L}_B$ 
 does not contain any nontrivial automorphism.
 \end{corollary}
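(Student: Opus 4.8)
The plan is to show that any disk automorphism $\phi$ lying in $\mathcal{L}_B$ is forced to fix two distinct points of $\mathbb{D}$, hence must be the identity. First I would record that, by Remark~\ref{rem}, such a $\phi$ maps $Z(B)=\{a_1,\dots,a_n\}$ into itself; since an automorphism is injective on $\mathbb{D}$ and $Z(B)$ is finite, $\phi$ restricts to a bijection of $Z(B)$, so there is a permutation $\sigma$ of $\{1,\dots,n\}$ with $\phi(a_i)=a_{\sigma(i)}$ for every $i$. Applying Theorem~\ref{auto} to the automorphism $\phi\in\mathcal{L}_B$ then yields $m_i\le m_{\sigma(i)}$ for all $i$.

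The key move is to upgrade these inequalities to equalities. Summing over $i$ and using that $\sigma$ is a bijection gives $\sum_i m_i\le\sum_i m_{\sigma(i)}=\sum_i m_i$, so every inequality $m_i\le m_{\sigma(i)}$ is in fact an equality (equivalently, iterate around each cycle of $\sigma$). Thus $\sigma$ can only permute indices within a block of zeros of equal multiplicity. Now the hypothesis enters: if $n\ge 3$, the strict chain $m_{n-2}<m_{n-1}<m_n$ shows that the value $m_n$ is attained only at $a_n$ and the value $m_{n-1}$ only at $a_{n-1}$, whence $\sigma(n)=n$ and $\sigma(n-1)=n-1$; in the degenerate case $n=2$ the hypothesis $m_1<m_2$ directly forces $\sigma=\mathrm{id}$. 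Either way, $\phi$ fixes the two distinct points $a_{n-1}$ and $a_n$ of $\mathbb{D}$ (the points $a_1$ and $a_2$ when $n=2$).

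To finish, I would invoke the standard fact that a non-identity disk automorphism cannot fix two distinct points of $\mathbb{D}$: if $\phi$ fixes $\omega\in\mathbb{D}$, then $B_\omega\circ\phi\circ B_\omega$ is an automorphism fixing $0$, hence a rotation $z\mapsto\lambda z$; a second fixed point $\omega'\ne\omega$ of $\phi$ would produce a nonzero fixed point of this rotation, forcing $\lambda=1$ and $\phi=\mathrm{id}$. This contradicts $\phi$ being nontrivial, so $\mathcal{L}_B$ contains no nontrivial automorphism. The proof is essentially a two-line argument once the inequality-to-equality step is in place; the only real subtlety is the bookkeeping around the hypothesis — it is precisely the need to isolate \emph{two} of the top multiplicities uniquely that forces \emph{two} strict inequalities (one would not suffice), and one must keep in mind the vacuous reading of $m_{n-2}$ when $n=2$.
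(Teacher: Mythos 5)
Your proposal is correct and follows essentially the same route as the paper: invoke Theorem~\ref{auto} to pin down $\phi(a_n)=a_n$ and $\phi(a_{n-1})=a_{n-1}$, then conclude via the two-fixed-points fact that $\phi$ is the identity. Your permutation/summing step merely makes explicit the bookkeeping that the paper's two-line proof leaves implicit.
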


\begin{proof}
	
	Let $\phi\in \mathcal{L}_B$ be a disk automorphism. By Theorem \ref{auto}, $\phi(a_n)=a_n$ and
$\phi(a_{n-1})=a_{n-1}$. Therefore $\phi$ has to be the identity map on $ \mathbb{D}$.
\end{proof}	

%	Let $\phi\in \mathcal{L}_B$, so that $\phi(\{a_i\}_{i=1}^{n})\subset\{a_i\}_{i=1}^{n}.$ Let $a$ be the zero of $B$ with maximum multiplicity.

%	{Case (i)}: Let $\phi(a)\neq a$. Then $\phi(a)=a_j$ for some $a_j \neq a$. Since $m_j<mult_B(a)$, then by Theorem \ref{auto}, $\phi$ is not an automorphism.

%	{Case (ii)}: Let $\phi(a)=a$. Let $b$ be the zero of $B$ with the second largest multiplicity. Then $\phi(b)\neq b$, otherwise by Schwartz lemma it will be the identity map.
%	So that $\phi(b)=a_j$ for some $a_j \neq b$, which implies  either  $m_j<{mult}_B(a)$  or $\phi(b)=a$, where the latter makes $\phi$ not injective. In either case $\phi$ is not an automorphism.

As a consequence, we get the following result.
\begin{theorem}
	Let $\theta$ be any inner function such that the Blaschke component of $\theta$ satisfies the
hypothesis of Corollary \ref{corrr}. Then
	$\theta H^p$ is not invariant under $C_{\phi}$ for any nontrivial disk automorphism $\phi$.
\end{theorem}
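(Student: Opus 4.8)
The plan is to reduce the statement at once to Corollary \ref{corrr} by splitting invariance of $\theta H^p$ into its Blaschke and singular parts. Suppose, for contradiction, that there is a nontrivial disk automorphism $\phi$ with $C_{\phi}(\theta H^p)\subseteq \theta H^p$. Write the inner factorization $\theta=BS$, where $B$ is the Blaschke component of $\theta$ (a finite Blaschke product of the form prescribed in Corollary \ref{corrr}) and $S$ is the singular component.

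First I would pass from invariance of $\theta H^p$ to invariance of $BH^p$. Since $\phi$ is an automorphism this is immediate from Corollary \ref{split2}; in fact one needs nothing about $\phi$ here, as the second assertion of Remark \ref{zero} already gives that $C_{\phi}(\theta H^p)\subseteq \theta H^p$ implies $C_{\phi}(BH^p)\subseteq BH^p$. Thus $\phi\in \mathcal{L}_B$.

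Next I would invoke the hypothesis on the Blaschke component: after relabelling, $B=\gamma\prod_{i=1}^{n}B_{a_i}^{m_i}$ with $n>1$, $m_1\le m_2\le\cdots\le m_n$ and $m_{n-2}<m_{n-1}<m_n$ (respectively $m_1<m_2$ when $n=2$). By Corollary \ref{corrr}, $\mathcal{L}_B$ contains no nontrivial automorphism, so the automorphism $\phi\in\mathcal{L}_B$ must be the identity map on $\mathbb{D}$, contradicting the choice of $\phi$. Hence $\theta H^p$ is invariant under $C_{\phi}$ for no nontrivial disk automorphism $\phi$.

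I do not expect any genuine obstacle: all the force of the argument is already contained in Corollary \ref{corrr} (itself resting on Theorem \ref{auto} and the Schwarz-lemma rigidity for self maps with two fixed points), and the only step needing a line of care is the reduction $C_{\phi}(\theta H^p)\subseteq\theta H^p \Rightarrow \phi\in\mathcal{L}_B$, which is exactly Remark \ref{zero}. If one preferred a self-contained reduction, one could instead note, as in Remark \ref{rem}, that $C_{\phi}(\theta H^p)\subseteq\theta H^p$ forces $\theta\circ\phi=\theta f$ with $f\in H^\infty$ (Theorem \ref{main}), hence $\phi$ maps $Z(\theta)=Z(B)$ into itself and $\operatorname{mult}_{B}(w)\le \operatorname{mult}_{B\circ\phi}(w)$ for $w\in Z(B)$, which by Proposition \ref{charb} is again precisely the statement $\phi\in\mathcal{L}_B$.
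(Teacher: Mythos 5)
Your proof is correct and follows essentially the same route as the paper: reduce to the Blaschke component via Remark \ref{zero} (so $\phi\in\mathcal{L}_B$) and then apply Corollary \ref{corrr}. The extra remarks about Corollary \ref{split2} and the self-contained reduction via Theorem \ref{main} and Proposition \ref{charb} are sound but not needed.
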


\begin{proof}
	Let $B$ be the Blaschke component of $\theta$ and let $B$ satisfies the hypothesis of
 Corollary \ref{corrr}. Suppose $\phi \in  \mathcal{L}_\theta$. Remark \ref{zero} tell us that $\phi \in  \mathcal{L}_B$. Then by Corollary \ref{corrr}, $\phi$ cannot be a nontrivial automorphism.
\end{proof}

Now we will give a characterization for $BH^p$ to be invariant under a composition
operator $C_\phi$ in terms of the value of derivatives of $\phi$ at the zeros of $B$. Before that,
 we will prove a lemma which will act as an important tool for proving the mentioned result.
 For a function $f$ and $n\in \mathbb{N}$, we denote  the $n^{\text{th}}$ derivative of $f$ at $z$ by $f^{(n)}(z)$. Also, we use $f^{(0)}(z)$ to denote $f(z)$.

\begin{lemma}\label{phi}
	Let $\phi\in \mathcal{S}$ such that $\phi(a)=b$ for $a,b\in\mathbb{D}$. For any $k\in \mathbb{N}$,
	$$(B_b\circ\phi)^{(l)}(a)=0   \, \text{ for all }   \, \, 1\leq l\leq k \text{ if and only if }
	\phi^{(l)}(a)=0  \, \, \text{for all }  \, \, 1\leq l\leq k.$$
\end{lemma}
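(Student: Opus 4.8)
The plan is to recast both sides of the asserted equivalence as a single statement about the order of vanishing of a holomorphic function at $a$, and then exploit the one structural fact we need: $B_b$ is a disk automorphism, so its derivative is zero-free on $\mathbb{D}$. Concretely, a direct computation gives $B_b'(z)=(|b|^2-1)/(1-\overline{b}z)^2$, which never vanishes on $\mathbb{D}$; in particular $B_b'(b)\neq 0$. If $\phi$ is constant it must equal $b$, whence $B_b\circ\phi\equiv 0$ and both sides of the equivalence hold trivially, so I would assume $\phi$ is nonconstant. Then $\phi-b$ is holomorphic near $a$, vanishes at $a$ (since $\phi(a)=b$), and is not identically zero, so it has a finite order of vanishing $m\geq 1$ at $a$; write $\phi(z)=b+(z-a)^m u(z)$ with $u$ holomorphic near $a$ and $u(a)\neq 0$. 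By the very definition of order of vanishing,
$$\phi^{(l)}(a)=0 \ \text{ for all }\ 1\leq l\leq k \iff m\geq k+1.$$

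The heart of the proof is to show that $B_b\circ\phi$ also vanishes to order exactly $m$ at $a$. Note first that $(B_b\circ\phi)(a)=B_b(b)=0$ automatically, so the left-hand side of the lemma says precisely that $B_b\circ\phi$ vanishes to order at least $k+1$ at $a$. Now I would Taylor-expand $B_b$ about $b$: since $B_b(b)=0$, one has $B_b(b+t)=t\bigl(B_b'(b)+O(t)\bigr)$, and substituting $t=(z-a)^m u(z)$ yields
$$(B_b\circ\phi)(z)=(z-a)^m u(z)\bigl(B_b'(b)+O((z-a)^m)\bigr).$$
Since $u(a)\neq 0$ and $B_b'(b)\neq 0$, the factor multiplying $(z-a)^m$ is nonzero at $z=a$, so $B_b\circ\phi$ vanishes to order exactly $m$ at $a$. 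Hence the left-hand side of the lemma is equivalent to $m\geq k+1$, which by the displayed equivalence is exactly the right-hand side, and we are done.

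I do not expect any genuine obstacle: the computations are routine, and the only thing needing a little care is the bookkeeping that ``the first $k$ derivatives vanish at $a$'' is the same as ``order of vanishing exceeds $k$'', used on both sides --- on the $B_b\circ\phi$ side one must also remember that the zeroth-order term is automatically absent because $B_b(b)=0$. A fully equivalent alternative, if one prefers to avoid order-of-vanishing language, is a short induction on $k$ via the Fa\`a di Bruno formula: granting the statement for $k-1$, at step $k$ every partition of $\{1,\dots,k\}$ into more than one block contributes $0$ to $(B_b\circ\phi)^{(k)}(a)$ because some derivative of $\phi$ of order between $1$ and $k-1$ appears, leaving $(B_b\circ\phi)^{(k)}(a)=B_b'(b)\,\phi^{(k)}(a)$, from which the conclusion follows since $B_b'(b)\neq 0$ (the converse direction being handled identically, or via the involution $\phi=B_b\circ(B_b\circ\phi)$). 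I would present the order-of-vanishing version, as it is shorter and makes the symmetry between the two sides manifest.
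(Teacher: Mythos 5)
Your proof is correct, but it takes a genuinely different route from the paper. The paper argues derivative by derivative: it writes $(B_b\circ\phi)^{(q)}=\big((B_b^{(1)}\circ\phi)\,\phi^{(1)}\big)^{(q-1)}$, expands with the Leibniz rule to isolate the term $B_b^{(1)}(\phi(a))\,\phi^{(q)}(a)$ (all other terms carrying a lower-order derivative of $\phi$ at $a$), and closes the loop by induction using the fact that $B_b^{(1)}$ is zero-free --- essentially the Fa\`a di Bruno variant you sketch as an alternative. You instead translate both sides into the single statement ``the order of vanishing at $a$ is at least $k+1$'' and observe that post-composition with $B_b$ preserves the order of vanishing of $\phi-b$ at $a$ precisely because $B_b'(b)\neq 0$; your explicit handling of the constant case is a nice touch (the paper's pointwise induction also covers it, just silently). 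Your version is shorter, makes the symmetry of the equivalence transparent, and isolates the only structural input (a local biholomorphism at $b$ cannot change vanishing orders); the paper's version has the advantage of being a purely formal derivative computation whose template is reused verbatim in the proof of Theorem \ref{fix}, where the analogous claim for $f=g\prod_i B_{a_i}$ versus $g$ is established by the same Leibniz expansion.
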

\begin{proof}
	Let $\phi\in \mathcal{S}$ such that $\phi(a)=b$. Thus, we get $(B_b\circ \phi)(a)=0$. 
Note that
$(B_b\circ\phi)^{(1)}(a)=B_b^{(1)}(\phi(a))\phi^{(1)}(a)$.  For any $ q\in \mathbb{N}$, by
Leibniz rule for differentiation of product of two functions, we have
	\begin{equation*}
		\begin{split}			(B_b\circ\phi)^{(q)}(a)&=\Big(\big(B_b^{(1)}\circ\phi\big)\phi^{(1)}\Big)^{(q-1)}(a)
			=\sum_{ r=0}^ {q-1}{{q-1}\choose{r}}\Big((B_b^{(1)}\circ\phi)^{(r)}\phi^{(q-r)}\Big)(a)\\
			&=B_b^{(1)}(\phi(a))\phi^{(q)}(a)+\sum_{ r=1}^ {q-1}{{q-1}\choose{r}}\Big((B_b^{(1)}
\circ\phi)^{(r)}\phi^{(q-r)}\Big)(a).\\
		\end{split}
	\end{equation*}	
	%Now, let $\phi^{(l)}(a)=0,  \text{ for } 1\leq l\leq k.$ Then by above equation, we get \linebreak $(B_b\circ\phi)^{(l)}(a)=0  \text{ for } 1\leq l\leq k$.	Conversely, let $(B_b\circ\phi)^{(l)}(a)=0  \text{ for } 1\leq l\leq k$. Since
	Since $B_b^{(1)}$ is a non-vanishing function, by using the above equation and principle
of mathematical induction, the desired result follows. % we have $(B_b\circ \phi)^{(l)}(a)=0$, for $1\leq l\leq k$ if and only if  $\phi^{(l)}(a)=0,$ for $1\leq l\leq k.$ implies  $\phi^{(1)}(a)=0$. For fixed $l\in \{1,2,..,k\}$, assume that  $\phi^{(s)}(a)=0$ for all $1\leq s<l$. Using the last equality of the above equation, we get $\phi^{(l)}(a)=0$.  Hence by Principle of Mathematical induction, we get $\phi^{(l)}(a)=0,  \text{ for } 1\leq l\leq k.$
\end{proof}
\begin{remark}
	In view of Lemma \ref{phi}, Theorem \ref{auto} still holds under the
following weaker assumption:
$\phi \in  \mathcal{L}_B$  and  $\phi^{(1)}(a_j)\neq 0$ for all $j\in \mathbb{N}$.
\end{remark}

For any $x\in \mathbb{R}$, we denote $\lceil x\rceil$ for the lowest integer greater than or equal to x.

\begin{theorem}\label{deriv}
	Let $B=\gamma \prod_{i\in\mathbb{N}}(\alpha_iB_{a_i})^{m_i}$ be a Blaschke product and $\phi \in  \mathcal{S}$. Fix
 $j\in \mathbb{N}$ and suppose $\phi(a_j)=a_k$ for some $k\in \mathbb{N}$. Then
 $ {mult}_{B}(a_j)\leq{mult}_{B\circ \phi}(a_j)$ if and only if  $(B_{a_k}\circ\phi)^{(l)}(a_j)=0 $
 for $0\leq l\leq {\Big\lceil\frac{m_j}{m_k}\Big\rceil}-1.$
	Moreover, if ${\Big\lceil\frac{m_j}{m_k}\Big\rceil}>1,$
	then these conditions are equivalent to
	$\phi^{(l)}(a_j)=0 $ for all $ 1\leq l\leq {\Big\lceil\frac{m_j}{m_k}\Big\rceil}-1. $
\end{theorem}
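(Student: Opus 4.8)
The plan is to analyze the multiplicity condition ${mult}_B(a_j) \le {mult}_{B\circ\phi}(a_j)$ by unravelling what it means factorwise. Since $B = \gamma\prod_{i}(\alpha_i B_{a_i})^{m_i}$ and the factors $(\alpha_i B_{a_i}\circ\phi)$ with $i\ne k$ do not vanish at $a_j$ (because $\phi(a_j)=a_k$ and the $a_i$ are distinct), the only contribution to the zero of $B\circ\phi$ at $a_j$ comes from $(\alpha_k B_{a_k}\circ\phi)^{m_k}$. Hence ${mult}_{B\circ\phi}(a_j) = m_k\cdot {ord}_{a_j}(B_{a_k}\circ\phi)$, where ${ord}_{a_j}$ denotes the order of vanishing at $a_j$. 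So the inequality ${mult}_B(a_j)\le {mult}_{B\circ\phi}(a_j)$ becomes $m_j \le m_k\cdot {ord}_{a_j}(B_{a_k}\circ\phi)$, i.e. ${ord}_{a_j}(B_{a_k}\circ\phi)\ge m_j/m_k$, and since the order is an integer this is equivalent to ${ord}_{a_j}(B_{a_k}\circ\phi)\ge \lceil m_j/m_k\rceil$.

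Next I would translate "order of vanishing at least $N$" into the vanishing of derivatives. Since $(B_{a_k}\circ\phi)(a_j)=0$ already holds, we have ${ord}_{a_j}(B_{a_k}\circ\phi)\ge N$ if and only if $(B_{a_k}\circ\phi)^{(l)}(a_j)=0$ for all $0\le l\le N-1$. Taking $N = \lceil m_j/m_k\rceil$ gives exactly the stated first equivalence: ${mult}_B(a_j)\le {mult}_{B\circ\phi}(a_j)$ iff $(B_{a_k}\circ\phi)^{(l)}(a_j)=0$ for $0\le l\le \lceil m_j/m_k\rceil - 1$. I should be a little careful that the $l=0$ case is the already-known identity $(B_{a_k}\circ\phi)(a_j)=0$, coming from $\phi(a_j)=a_k$ together with $B_{a_k}(a_k)=0$, so it is automatic and the content is really in the derivatives $1\le l\le \lceil m_j/m_k\rceil - 1$.

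For the "moreover" part, suppose $\lceil m_j/m_k\rceil > 1$, so that there is at least one derivative condition $1\le l\le \lceil m_j/m_k\rceil - 1$ to impose. I would invoke Lemma \ref{phi} directly with $a = a_j$, $b = a_k$ (note $\phi(a_j)=a_k$, and both lie in $\mathbb{D}$) and $k$ there replaced by our $\lceil m_j/m_k\rceil - 1$: it says precisely that $(B_{a_k}\circ\phi)^{(l)}(a_j)=0$ for all $1\le l\le \lceil m_j/m_k\rceil - 1$ if and only if $\phi^{(l)}(a_j)=0$ for all $1\le l\le \lceil m_j/m_k\rceil - 1$. Combining this with the first equivalence yields the final assertion. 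There is no serious obstacle here; the only thing to watch is the indexing/ceiling bookkeeping and the harmless degenerate case $m_j \le m_k$ (equivalently $\lceil m_j/m_k\rceil = 1$), in which the derivative list $1\le l\le 0$ is empty and the condition ${mult}_B(a_j)\le {mult}_{B\circ\phi}(a_j)$ holds automatically since $B_{a_k}\circ\phi$ vanishes at $a_j$ to order at least $1$, so $m_k \cdot 1 \ge m_j$. The main step that needs care, rather than difficulty, is justifying that no other factor $\alpha_i B_{a_i}\circ\phi$ ($i\ne k$) contributes to the zero at $a_j$, which uses only that the $a_i$ are distinct and that $\phi(a_j)=a_k\ne a_i$.
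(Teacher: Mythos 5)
Your proof is correct and follows essentially the same route as the paper: isolate the factor $(\alpha_k B_{a_k}\circ\phi)^{m_k}$ as the sole contributor to the zero at $a_j$, reduce the multiplicity inequality to ${mult}_{B_{a_k}\circ\phi}(a_j)\ge\lceil m_j/m_k\rceil$, translate that into vanishing derivatives, and apply Lemma \ref{phi} for the final equivalence. Your extra remarks on the automatic $l=0$ condition and the degenerate case $\lceil m_j/m_k\rceil=1$ are accurate but not needed beyond what the paper records.
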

\begin{proof}
	Fix $j\in \mathbb{N}$. Suppose $\phi(a_j)=a_k$ for some $k\in \mathbb{N}$. Then
$\big(B_{a_k}\circ \phi\big)(a_j)=0$ and $\big(B_{a_i}\circ \phi\big)(a_j)\neq 0$ for any
 $i \neq k$. Also, we have 	$$B\circ \phi=
	\gamma\prod_{i\in\mathbb{N}}(\alpha_iB_{a_i}\circ \phi)^{m_i}=
	\gamma(\alpha_kB_{a_k}\circ \phi )^{m_k}\prod_{i\in\mathbb{N}, i\neq k }
(\alpha_iB_{a_i}\circ \phi )^{m_i}.$$
This yields
	 that ${mult}_{B\circ \phi}(a_j)={{mult}}_{(B_{a_k}\circ \phi)^{m_k}}(a_j)$.
 It is easy to verify that ${mult}_{(B_{a_k}\circ \phi)^{m_k}}(a_j)=m_k {mult}_{B_{a_k}\circ \phi}(a_j)$.
	Thus,
	\begin{equation*}
		\begin{split}	{mult}_{B\circ \phi}(a_j)\geq {mult}_{B}(a_j)
			& \Leftrightarrow m_k{mult}_{B_{a_k}\circ \phi}(a_j)\geq m_j
			\Leftrightarrow {mult}_{B_{a_k}\circ \phi}(a_j)\geq \Big\lceil\frac{m_j}{m_k}\Big\rceil\\
			&\Leftrightarrow (B_{a_k}\circ\phi)^{(l)}(a_j)=0 \text{ for }0\leq l\leq
 {\Big\lceil\frac{m_j}{m_k}\Big\rceil}-1.\\
		\end{split}
	\end{equation*}
	Furthermore, if ${\Big\lceil\frac{m_j}{m_k}\Big\rceil}>1$, then by  Lemma \ref{phi}
 the above statement is equivalent to 	
	$\phi^{(l)}(a_j)=0 $ for all $1\leq l\leq {\Big\lceil\frac{m_j}{m_k}\Big\rceil}-1 $.
\end{proof}

Using Proposition \ref{charb} and Theorem \ref{deriv}, we have the following result.

\begin{theorem}\label{main1}
	Let $B=\gamma \prod_{i\in\mathbb{N}}(\alpha_iB_{a_i})^{m_i}$ be a Blaschke product and
$\phi\in \mathcal{S}$ with $\phi(\{a_i\}_{i\in\mathbb{N}})\subseteq\{a_i\}_{i\in\mathbb{N}}.$
Then  $C_{\phi}(BH^p)\subseteq BH^p$ if and only if $\phi^{(l)}(a_i)=0 $ for all $1\leq l\leq{\Big\lceil\frac{{mult}_B(a_i)}{mult_{B}(\phi{(a_i)})}\Big\rceil}-1$, whenever
${mult}_B(a_i)>{mult}_B(\phi(a_i)). $

\end{theorem}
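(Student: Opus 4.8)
The plan is to reduce Theorem \ref{main1} to the combination of Proposition \ref{charb} and Theorem \ref{deriv} that has already been set up, handling the index set carefully. By Proposition \ref{charb}, $C_{\phi}(BH^p)\subseteq BH^p$ holds if and only if ${mult}_B(w)\leq {mult}_{B\circ\phi}(w)$ for every $w\in Z(B)$. Since $Z(B)=\{a_i\}_{i\in\mathbb{N}}$, this means the inequality ${mult}_B(a_j)\leq {mult}_{B\circ\phi}(a_j)$ must hold for every $j\in\mathbb{N}$. Because we have assumed $\phi(\{a_i\}_{i\in\mathbb{N}})\subseteq\{a_i\}_{i\in\mathbb{N}}$, for each fixed $j$ there is some $k\in\mathbb{N}$ with $\phi(a_j)=a_k$, so Theorem \ref{deriv} applies verbatim to that index $j$.

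The first step is to observe that whenever ${mult}_B(a_j)\leq {mult}_B(\phi(a_j))$, i.e. $m_j\leq m_k$, the ceiling $\lceil m_j/m_k\rceil$ equals $1$, and so the condition ``$(B_{a_k}\circ\phi)^{(l)}(a_j)=0$ for $0\leq l\leq \lceil m_j/m_k\rceil-1$'' from Theorem \ref{deriv} reduces to just $(B_{a_k}\circ\phi)(a_j)=0$, which is automatic since $\phi(a_j)=a_k$. Hence for these indices the multiplicity inequality ${mult}_B(a_j)\leq {mult}_{B\circ\phi}(a_j)$ holds with no extra requirement on $\phi$. The second step is to treat the remaining indices, those $j$ with ${mult}_B(a_j)>{mult}_B(\phi(a_j))$, i.e. $m_j>m_k$; for such $j$ we have $\lceil m_j/m_k\rceil>1$, so the ``moreover'' clause of Theorem \ref{deriv} gives that ${mult}_B(a_j)\leq {mult}_{B\circ\phi}(a_j)$ is equivalent to $\phi^{(l)}(a_j)=0$ for all $1\leq l\leq \lceil m_j/m_k\rceil-1 = \lceil {mult}_B(a_j)/{mult}_B(\phi(a_j))\rceil-1$.

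Putting these together: the condition of Proposition \ref{charb} is ``${mult}_B(a_j)\leq {mult}_{B\circ\phi}(a_j)$ for all $j\in\mathbb{N}$'', which by the case split is equivalent to ``for every $j$ with ${mult}_B(a_j)>{mult}_B(\phi(a_j))$, we have $\phi^{(l)}(a_j)=0$ for all $1\leq l\leq \lceil {mult}_B(a_j)/{mult}_B(\phi(a_j))\rceil-1$'', since the indices in the complementary case impose no constraint. This is exactly the statement of Theorem \ref{main1}. I do not anticipate a genuine obstacle here — the work is entirely in Theorem \ref{deriv} and Proposition \ref{charb}; the only point requiring a moment's care is the bookkeeping that $k$ depends on $j$ (so $\phi(a_i)$ should be read as the appropriate $a_k$ for each $i$) and the harmless observation that the case $m_j\le m_k$ contributes nothing, so the quantifier in the final statement may be restricted to the indices with ${mult}_B(a_i)>{mult}_B(\phi(a_i))$.
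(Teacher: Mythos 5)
Your proposal is correct and is exactly the argument the paper intends: the paper states Theorem \ref{main1} as an immediate consequence of Proposition \ref{charb} and Theorem \ref{deriv} without writing out the details, and your case split (indices with ${mult}_B(a_j)\le {mult}_B(\phi(a_j))$ imposing no constraint since the ceiling is $1$, the remaining indices handled by the ``moreover'' clause of Theorem \ref{deriv}) is precisely the missing bookkeeping. No gaps.
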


In particular, if the Blaschke product has exactly two zeros then the following 
result holds.
\begin{corollary}\label{inva}
	Let $B=\gamma B_{a_1}^mB_{a_2}^n$ with $m> n$ and let $\phi\in \mathcal{S}$
with
$\phi(a_1)=a_2$.
	Then  $C_{\phi}(BH^p)\subseteq BH^p$ if and only if
	$ \phi^{(l)}(a_1)=0 $ for all $1\leq l\leq \Big\lceil\frac{m}{n}\Big\rceil-1. $
\end{corollary}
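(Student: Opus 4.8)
The plan is to obtain Corollary \ref{inva} as a direct specialization of Theorem \ref{main1} to the Blaschke product $B=\gamma B_{a_1}^m B_{a_2}^n$ with exactly two distinct zeros $a_1,a_2$ of multiplicities $m$ and $n$. First I would check that the hypotheses of Theorem \ref{main1} are met: with $\phi(a_1)=a_2$ and $\phi(a_2)$ required to lie in $\{a_1,a_2\}$ for $BH^p$ to be invariant (this is forced by Remark \ref{rem}, since $C_\phi(BH^p)\subseteq BH^p$ implies $\phi$ maps $Z(B)=\{a_1,a_2\}$ into itself), the map $\phi$ satisfies $\phi(\{a_1,a_2\})\subseteq\{a_1,a_2\}$, so Theorem \ref{main1} applies. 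Thus $C_\phi(BH^p)\subseteq BH^p$ holds if and only if $\phi^{(l)}(a_i)=0$ for all $1\le l\le \lceil \mathrm{mult}_B(a_i)/\mathrm{mult}_B(\phi(a_i))\rceil-1$, at each zero $a_i$ where $\mathrm{mult}_B(a_i)>\mathrm{mult}_B(\phi(a_i))$.

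Next I would run through the two zeros separately. At $a_1$: we have $\mathrm{mult}_B(a_1)=m$ and $\mathrm{mult}_B(\phi(a_1))=\mathrm{mult}_B(a_2)=n$, and since $m>n$ by hypothesis, the condition $\mathrm{mult}_B(a_1)>\mathrm{mult}_B(\phi(a_1))$ is satisfied; hence Theorem \ref{main1} imposes $\phi^{(l)}(a_1)=0$ for all $1\le l\le \lceil m/n\rceil-1$. At $a_2$: here $\phi(a_2)\in\{a_1,a_2\}$, so $\mathrm{mult}_B(\phi(a_2))\in\{m,n\}$, and in either case $\mathrm{mult}_B(a_2)=n\le \mathrm{mult}_B(\phi(a_2))$, so the strict inequality $\mathrm{mult}_B(a_2)>\mathrm{mult}_B(\phi(a_2))$ fails and Theorem \ref{main1} imposes no condition at $a_2$. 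Therefore the full criterion from Theorem \ref{main1} collapses to exactly the single family of conditions $\phi^{(l)}(a_1)=0$ for $1\le l\le \lceil m/n\rceil-1$, which is the statement of the corollary.

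The only point that needs a word of care — and I'd expect it to be the main (minor) obstacle — is justifying why we may assume $\phi(a_2)\in\{a_1,a_2\}$ rather than treating it as an extra hypothesis: the corollary as stated only assumes $\phi(a_1)=a_2$. The resolution is that the biconditional in the corollary should be read as: \emph{given} $\phi(a_1)=a_2$, the inclusion $C_\phi(BH^p)\subseteq BH^p$ is equivalent to the stated derivative conditions. For the forward direction, $C_\phi(BH^p)\subseteq BH^p$ forces (by Remark \ref{rem}) $\phi(a_2)\in Z(B)=\{a_1,a_2\}$, putting us in the situation handled above, so the derivative conditions follow. For the reverse direction, one must also verify that the derivative conditions together with $\phi(a_1)=a_2$ actually give $\phi(\{a_1,a_2\})\subseteq\{a_1,a_2\}$ so that Proposition \ref{charb}/Theorem \ref{main1} can be invoked — but in fact for the reverse implication it is cleanest to go directly through Proposition \ref{charb}: the derivative vanishing at $a_1$ gives $\mathrm{mult}_{B_{a_2}\circ\phi}(a_1)\ge\lceil m/n\rceil$ by Theorem \ref{deriv}, hence $\mathrm{mult}_{B\circ\phi}(a_1)\ge n\lceil m/n\rceil\ge m=\mathrm{mult}_B(a_1)$, while at $a_2$ the inequality $\mathrm{mult}_{B\circ\phi}(a_2)\ge n=\mathrm{mult}_B(a_2)$ holds automatically regardless of where $\phi$ sends $a_2$ (if $\phi(a_2)\notin\{a_1,a_2\}$ then $\mathrm{mult}_{B\circ\phi}(a_2)=0<n$, so in fact the reverse direction also forces $\phi(a_2)\in\{a_1,a_2\}$ — but this is subsumed: if the inclusion is to hold we need $a_2\in Z(B\circ\phi)$). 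So Proposition \ref{charb} yields $C_\phi(BH^p)\subseteq BH^p$. Assembling these observations completes the proof; essentially it is bookkeeping on top of Theorem \ref{main1} and Theorem \ref{deriv}, with the ceiling arithmetic $\lceil m/n\rceil$ being the only quantitative ingredient.
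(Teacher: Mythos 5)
Your overall route is the same one the paper takes: Corollary \ref{inva} is meant to be read off from Theorem \ref{main1} (the paper offers no separate proof beyond ``in particular''), and your bookkeeping at $a_1$ --- that $\mathrm{mult}_B(a_1)=m>n=\mathrm{mult}_B(\phi(a_1))$ imposes exactly the conditions $\phi^{(l)}(a_1)=0$ for $1\le l\le\lceil m/n\rceil-1$, via Theorem \ref{deriv} --- is correct, as is the observation that the forward implication supplies $\phi(a_2)\in\{a_1,a_2\}$ through Remark \ref{rem}.

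The weak point is your reverse direction at $a_2$. You assert that $\mathrm{mult}_{B\circ\phi}(a_2)\ge n$ ``holds automatically regardless of where $\phi$ sends $a_2$,'' then concede in parentheses that it fails when $\phi(a_2)\notin\{a_1,a_2\}$, and dismiss this with ``this is subsumed: if the inclusion is to hold we need $a_2\in Z(B\circ\phi)$.'' That dismissal is circular: in the reverse direction the inclusion is the conclusion, not a hypothesis, so you cannot use it to locate $\phi(a_2)$. And the stated hypotheses genuinely do not force $\phi(a_2)\in\{a_1,a_2\}$: take $a_1=0$, $a_2=\tfrac12$, $m=2$, $n=1$ and $\phi(z)=\tfrac12+\tfrac18z^2$, a self map of $\mathbb{D}$ with $\phi(0)=a_2$ and $\phi'(0)=0$ (the only derivative condition required), yet $\phi(a_2)=\tfrac{17}{32}\notin\{a_1,a_2\}$, so $\mathrm{mult}_{B\circ\phi}(a_2)=0<n$ and Proposition \ref{charb} gives $C_\phi(BH^p)\nsubseteq BH^p$. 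In other words, the ``if'' direction of the corollary is false as literally stated; it must be read as inheriting the standing hypothesis $\phi(\{a_1,a_2\})\subseteq\{a_1,a_2\}$ of Theorem \ref{main1}. Once that hypothesis is made explicit, your analysis at $a_2$ (the strict inequality $\mathrm{mult}_B(a_2)>\mathrm{mult}_B(\phi(a_2))$ never holds there, so no condition is imposed) goes through verbatim. So: right approach and correct arithmetic, but the patch you offer for the one delicate point does not work --- the fix is to add (or acknowledge as implicit) the hypothesis on $\phi(a_2)$, not to argue it away.
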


Now, we will focus solely on the finite Blaschke products. We will be
completely describing the set $\mathcal{L}_B$ for the Blaschke products
 with exactly one zero as well as two zeros.

\begin{theorem}
	Let $B=\gamma B_a^m$, where $a\in \mathbb{D}$ and $m\in \mathbb{N}$. Then
	$$ \mathcal{L}_B=\{B_a \circ\psi\circ B_a: \psi(0)=0 \text{ and } \psi\in \mathcal{S}\}.  $$
\end{theorem}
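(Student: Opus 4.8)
The plan is to identify $\mathcal{L}_B$ with a fixed-point set inside $\mathcal{S}$ and then to recognize that set as the conjugate of $\{\psi\in\mathcal{S}:\psi(0)=0\}$ by the involution $B_a$.

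First I would establish that $\mathcal{L}_B=\{\phi\in\mathcal{S}:\phi(a)=a\}$. Necessity is immediate: since $Z(B)=\{a\}$, Remark \ref{rem} forces $\phi(a)=a$ for every $\phi\in\mathcal{L}_B$. For sufficiency, suppose $\phi\in\mathcal{S}$ with $\phi(a)=a$. Then $(B_a\circ\phi)(a)=B_a(a)=0$, so $mult_{B_a\circ\phi}(a)\geq 1$. Since $B\circ\phi=\gamma(B_a\circ\phi)^m$, the order of vanishing at $a$ multiplies, giving $mult_{B\circ\phi}(a)=m\cdot mult_{B_a\circ\phi}(a)\geq m=mult_B(a)$. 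Applying Proposition \ref{charb} with $B_1=B_2=B$ then yields $C_\phi(BH^p)\subseteq BH^p$, i.e. $\phi\in\mathcal{L}_B$.

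Next I would use that $B_a$ is an involutive disk automorphism: $B_a\circ B_a=\mathrm{id}$, $B_a(a)=0$, and $B_a(0)=a$. If $\psi\in\mathcal{S}$ satisfies $\psi(0)=0$, then $\phi:=B_a\circ\psi\circ B_a$ lies in $\mathcal{S}$ and $\phi(a)=B_a(\psi(B_a(a)))=B_a(\psi(0))=B_a(0)=a$, so $\phi\in\mathcal{L}_B$ by the previous step. Conversely, given $\phi\in\mathcal{L}_B$, set $\psi:=B_a\circ\phi\circ B_a\in\mathcal{S}$; then $\psi(0)=B_a(\phi(B_a(0)))=B_a(\phi(a))=B_a(a)=0$, and since $B_a$ is an involution, $B_a\circ\psi\circ B_a=(B_a\circ B_a)\circ\phi\circ(B_a\circ B_a)=\phi$. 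This proves both inclusions, hence the asserted equality.

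I do not expect a genuine obstacle here; the argument is essentially bookkeeping with the involution $B_a$. The one point meriting a moment's care is the sufficiency direction of the first step: one must notice that the order of vanishing of $B\circ\phi$ at $a$ is automatically a multiple of $m$, so the multiplicity inequality in Proposition \ref{charb} is satisfied the instant $\phi$ fixes $a$, with no further condition on the derivatives of $\phi$ (in contrast to the two-zero case of Corollary \ref{inva}).
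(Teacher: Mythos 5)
Your proposal is correct and follows essentially the same route as the paper: the paper reduces the statement to ``$\phi\in\mathcal{L}_B$ iff $\phi(a)=a$'' by citing Proposition \ref{necc} (whose proof is exactly your multiplicity argument via Proposition \ref{charb}) and then identifies the fixed-point set with the conjugate of $\{\psi\in\mathcal{S}:\psi(0)=0\}$ under the involution $B_a$, just as you do. You merely spell out the two steps that the paper leaves as citations or as ``easily seen.''
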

\begin{proof}
	By Proposition \ref{necc}, $\phi\in \mathcal{L}_B$ if and only if $\phi(a)=a$. It can be
easily seen that $\phi(a)=a$ if and  only if $\phi=B_a \circ\psi\circ B_a$ for some
$\psi\in \mathcal{S}$ with $\psi(0)=0$.
\end{proof}

%\begin{proposition}\label{cand}
%	Let $n>1$ and $\{a_i\}_{i=1}^{n}\subseteq \mathbb{D}$. Fix $a\in \{a_1,a_2,\ldots,a_n\}$.
%Then $\phi\in  \mathcal{S}$ such that $\phi(a_i)=a$ for all $1\leq i\leq n$ if and only if
%$\phi= B_a\circ\big(g\prod_{i=1}^{n} B_{a_i}\big)$, for some $g\in \mathcal{B}_1$.
%\end{proposition}
%\begin{proof}
%Since $\phi(a_i)=a$, we get  $(B_a \circ \phi)(a_i)=0$ for  $1\leq i\leq n$.
% Thus,
%$B_a \circ \phi=g\prod_{i=1}^{n} B_{a_i}$ for some $g\in \mathcal{B}_1$. Therefore,
%$\phi=B_a \circ \big(g\prod_{i=1}^{n} B_{a_i}\big)$  for some $g\in \mathcal{B}_1$.
%The converse part can be verified easily.
%\end{proof}

\begin{theorem}\label{fix}
	Let $B=\gamma \prod_{i=1}^{n}B_{a_i}^{m_i}$ and let $\phi\in \mathcal{S}$ such that for all
 $i$, $\phi(a_i)=a_j$ for some $j$. Then $C_{\phi}(BH^p)\subseteq BH^p$ if and
  only if $\phi=B_{a_j} \circ \big(h\prod_{i=1}^{n} B_{a_i}^{k_i}\big)$, for some
  $h\in \mathcal{B}_1$ and $k_i=\lceil \frac{m_i}{m_j}\rceil$. 	
\end{theorem}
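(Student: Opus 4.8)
The plan is to combine the multiplicity criterion of Proposition~\ref{charb} with the derivative criterion of Theorem~\ref{deriv}, and then to recognize the resulting condition as divisibility of an auxiliary self-map by a finite Blaschke product. First, under the standing hypothesis every zero of $B$ is sent by $\phi$ to the single point $a_j$ (in particular $\phi(a_j)=a_j$), so $Z(B)=\{a_1,\dots,a_n\}$ and, by Proposition~\ref{charb}, $C_\phi(BH^p)\subseteq BH^p$ holds if and only if ${mult}_B(a_i)\le {mult}_{B\circ\phi}(a_i)$ for every $i$. Applying Theorem~\ref{deriv} with $\phi(a_i)=a_j$ (so the index ``$k$'' appearing there is our $j$), this is equivalent to ${mult}_{B_{a_j}\circ\phi}(a_i)\ge\lceil m_i/m_j\rceil=k_i$ for every $i$; equivalently, $(B_{a_j}\circ\phi)^{(l)}(a_i)=0$ for $0\le l\le k_i-1$ and all $i$.

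Next I would set $\psi:=B_{a_j}\circ\phi$. Since $\phi\in\mathcal{S}$ and $B_{a_j}$ is a disk automorphism, $\psi\in\mathcal{S}$ (in particular $\psi\in H^\infty$); and since $B_{a_j}$ is an involution ($B_{a_j}\circ B_{a_j}=\mathrm{id}$), we may rewrite $\phi=B_{a_j}\circ\psi$. With this substitution the criterion above becomes: $\psi$ is a holomorphic self-map of $\mathbb{D}$ having a zero of order at least $k_i$ at each $a_i$. It then remains to prove that this property is equivalent to $\psi=h\prod_{i=1}^n B_{a_i}^{k_i}$ for some $h\in\mathcal{B}_1$. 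The implication ``$\Leftarrow$'' is immediate: if $\psi=h\prod_i B_{a_i}^{k_i}$ with $\|h\|_\infty\le 1$, then $\psi$ is holomorphic, $|\psi|\le 1$ on $\mathbb{D}$, and ${mult}_\psi(a_i)\ge k_i$; moreover the corresponding $\phi=B_{a_j}\circ\psi$ then automatically satisfies $\phi(a_i)=B_{a_j}(0)=a_j$, so such a $\phi$ is indeed covered by the hypothesis. For ``$\Rightarrow$'' (assuming $\psi\not\equiv 0$, the case $\psi\equiv 0$ being trivial with $h\equiv 0$), the Blaschke product of $\psi$ furnished by Theorem~\ref{RFT} has a zero of order at least $k_i$ at each $a_i$, hence is divisible by $\prod_i B_{a_i}^{k_i}$; therefore $h:=\psi/\prod_i B_{a_i}^{k_i}\in H^p$. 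Since $\prod_i B_{a_i}^{k_i}$ is inner, $|\tilde h|=|\tilde\psi|\le 1$ a.e.\ on $\mathbb{T}$, and Theorem~\ref{bdd} gives $h\in H^\infty$ with $\|h\|_\infty\le 1$, i.e.\ $h\in\mathcal{B}_1$.

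Chaining these equivalences yields $C_\phi(BH^p)\subseteq BH^p$ if and only if $\phi=B_{a_j}\circ\psi=B_{a_j}\circ\big(h\prod_{i=1}^n B_{a_i}^{k_i}\big)$ for some $h\in\mathcal{B}_1$, which is the assertion. The one step that calls for genuine care, and which I expect to be the main (if mild) obstacle, is the division step: verifying that $\psi/\prod_i B_{a_i}^{k_i}$ not merely extends holomorphically across the $a_i$ but actually lies in $H^p$, so that Theorem~\ref{bdd} applies and produces an element of the closed unit ball $\mathcal{B}_1$; this is precisely where the Riesz factorization theorem (together with uniqueness of the inner factorization) is needed. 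Everything else is a formal combination of Proposition~\ref{charb}, Theorem~\ref{deriv}, and the involutive identity $B_{a_j}\circ B_{a_j}=\mathrm{id}$.
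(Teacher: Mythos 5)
Your proof is correct, and while its skeleton matches the paper's (reduce to the multiplicity/derivative criterion, then factor $\phi$ through the involution $B_{a_j}$), the decisive technical step is handled by a genuinely different mechanism. The paper first writes $B_{a_j}\circ\phi=g\prod_{i=1}^{n}B_{a_i}$ using only first-order vanishing, and then must run two separate Leibniz-rule inductions (in the spirit of Lemma~\ref{phi}) to translate the condition $\phi^{(l)}(a_i)=0$ from Theorem~\ref{main1} into the statement that $g$ is divisible by $B_{a_i}^{k_i-1}$ for each $i$; this is where all the combinatorial work of the paper's argument lives. You instead stay at the level of Theorem~\ref{deriv} before it is converted into derivatives of $\phi$ itself: the invariance condition is read off directly as $\operatorname{mult}_{B_{a_j}\circ\phi}(a_i)\ge k_i$, and the entire divisibility statement $B_{a_j}\circ\phi=h\prod_{i=1}^{n}B_{a_i}^{k_i}$ with $h\in\mathcal{B}_1$ is obtained in one stroke from Riesz factorization (Theorem~\ref{RFT}) together with the boundary-modulus argument of Theorem~\ref{bdd}. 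This buys you a shorter and more conceptual proof that avoids the multinomial Leibniz computation entirely, at the (negligible) cost of invoking the canonical factorization machinery; the paper's route, conversely, is more self-contained at the level of elementary calculus and makes the role of the individual derivatives $\phi^{(l)}(a_i)$ explicit. You were also right to flag, and correct to resolve, the only delicate point in your version: that the quotient $\psi/\prod_i B_{a_i}^{k_i}$ lands in $H^p$ so that Theorem~\ref{bdd} upgrades it to an element of $\mathcal{B}_1$, and that the resulting maps $\phi=B_{a_j}\circ\big(h\prod_i B_{a_i}^{k_i}\big)$ automatically satisfy the standing hypothesis $\phi(a_i)=a_j$.
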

\begin{proof}
Suppose $\phi\in \mathcal{S}$ such that for all $i$, $\phi(a_i)=a_j$ for some $j$.
 It implies that $(B_{a_j} \circ \phi)(a_i)=0$ for  $1\leq i\leq n$.
 Thus,
$B_{a_j} \circ \phi=g\prod_{i=1}^{n} B_{a_i}$ for some $g\in \mathcal{B}_1$. Therefore,
%$\phi=B_{a_j} \circ \big(g\prod_{i=1}^{n} B_{a_i}\big)$  for some $g\in \mathcal{B}_1$.
%
%
%
%
%Then by Proposition \ref{cand},  
$\phi=B_{a_j}\circ f$, where $f=g\prod_{i=1}^{n} B_{a_i}$ for
 some $g\in \mathcal{B}_1$. 	
 
	Fix an $a_i$. As in the proof of Lemma \ref{phi}, we can verify that, for any $k\in \mathbb{N}$,
 $\phi^{(l)}(a_i)=0$ { for all } $1\leq l\leq k$ { if and only if } $f^{(l)}(a_i)=0$ { for all } $1\leq l\leq k.$

	Next we claim that for any $k\in \mathbb{N}$, $f^{(l)}(a_i)=0$ { for all } $1\leq l\leq k$
 { if and only if } $g^{(l)}(a_i)=0$ { for all } $0 \leq l\leq k-1.$ Now for any $ q\in \mathbb{N}$,
  by generalizing the Leibniz rule for differentiation of product of functions
  and by using $B_{a_i}(a_i)=0$, we have
	
	\begin{equation*}
		\begin{split}
			f^{(q)}(a_i)&=\sum_ {q_0+q_1+q_2+\cdot\cdot\cdot +q_n =q}{{q}\choose {q_0,q_1,q_2,\ldots,q_n}}\big(g^{(q_0)}B_{a_1}^{(q_1)}B_{a_2}^{(q_2)}\cdot\cdot\cdot B_{a_n}^{(q_n)}\big)(a_i)\\
			&=\sum_{\substack {q_0+q_1+q_2+\cdot\cdot\cdot+q_n =q\\q_i>0}}{{q}\choose {q_0,q_1,q_2,\ldots,q_n}}\big(g^{(q_0)}B_{a_1}^{(q_1)}B_{a_2}^{(q_2)}\cdot\cdot\cdot B_{a_n}^{(q_n)}\big)(a_i)\\
%&+\sum_{\substack {q_0+q_1+q_2+\cdot\cdot\cdot+q_n =q\\q_i=0}}{{q}\choose {q_0,q_1,q_2,\ldots,q_n}}
%\big(g^{(q_0)}B_{a_1}^{(q_1)}B_{a_2}^{(q_2)}\cdot\cdot\cdot B_{a_n}^{(q_n)}\big)(a_i)\\
			&={{q}\choose {q-1,0,\ldots,1,\ldots,0,0}}\Big(g^{(q-1)}B_{a_i}^{(1)}\prod_{{r=1},r\neq i}^{n}B_{a_r}\Big)(a_i)\\
&\hspace{0.5cm} +			\sum_{\substack {q_0+q_1+q_2+\cdot\cdot\cdot+q_n=q\\q_i\geq1,q_0<q-1}}{{q}\choose {q_0,q_1,q_2,\ldots,q_n}}\big(g^{(q_0)}B_{a_1}^{(q_1)}B_{a_2}^{(q_2)}\cdot\cdot\cdot B_{a_n}^{(q_n)}\big)(a_i),\\
		\end{split}
	\end{equation*}
	where ${{q}\choose {q_0,q_1,q_2,\ldots,q_n}}=\frac{q!}{q_0!q_1!q_2!\cdot\cdot\cdot q_n!}$.
	As similar to the proof in Lemma \ref{phi}, using the above equation we can prove our claim.

Therefore, for any $a_i$, we have  $\phi^{(l)}(a_i)=0 $ for all $ 1\leq l\leq k$ if and only if
$g^{(l)}(a_i)=0,$  for all $0 \leq l\leq k-1$,  which is equivalent to the statement $g=B_{a_i}^kg_1$,
for some $g_1\in \mathcal{B}_1.$
Thus by  Theorem \ref{main1}, we can conclude that
$ C_{\phi}(BH^p)\subseteq BH^p$ if and only if $\phi=B_{a_j} \circ \big(h\prod_{i=1}^{n} B_{a_i}^{k_i}\big)$
 for some $h\in \mathcal{B}_1$ and $k_i=\lceil \frac{m_i}{m_j}\rceil$.
\end{proof}

\begin{proposition}\label{autom}
Given any finite points $\{a_1,a_2,\ldots,a_n\}\subseteq \mathbb{D}$, there exists at most one self
 map $\phi$ of $\mathbb{D}$ such that
$\phi(a_i)=a_{i+1}$ for  $1\leq i <n$ and $\phi(a_n)=a_1$. If such a map exists, then it will be an
 elliptic automorphism.
\end{proposition}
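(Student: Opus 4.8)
The plan is to fix a holomorphic self map $\phi$ of $\mathbb{D}$ realizing the prescribed cyclic assignment (the proposition asserts nothing about existence, so one may simply assume such a $\phi$ is given) and to show it is forced to be an elliptic automorphism; uniqueness will then follow in one line. Throughout I take the $n$ points $a_1,\dots,a_n$ to be distinct, in particular $n\ge 2$: this is the only case with content, since if two of the $a_i$ coincided the hypotheses would collapse to the single condition that $\phi$ fix a point, which a vast family of self maps satisfies.

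First I would note that $a_1,\dots,a_n$ is a single $\phi$-orbit of length $n$, so the $n$-fold composite $\phi^{\circ n}:=\phi\circ\cdots\circ\phi$ fixes each $a_i$ and hence has $n\ge 2$ distinct fixed points in $\mathbb{D}$. By the Schwarz--Pick lemma -- in the sharp form that a holomorphic self map of $\mathbb{D}$ with two distinct interior fixed points is the identity, mildly strengthening the statement recorded in Section~2 -- this forces $\phi^{\circ n}=\mathrm{id}_{\mathbb{D}}$. Consequently $\phi^{\circ (n-1)}$ is a two-sided compositional inverse of $\phi$, so $\phi\in\mathrm{Aut}(\mathbb{D})$ and its order divides $n$. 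This order is not $1$: otherwise $\phi=\mathrm{id}_{\mathbb{D}}$ and $a_{i+1}=\phi(a_i)=a_i$, contradicting distinctness. Since parabolic and hyperbolic disk automorphisms are conjugate, via the Cayley transform, to a nontrivial translation and to a dilation of the upper half plane respectively and therefore have infinite order, a non-identity automorphism of finite order must be elliptic; hence $\phi$ is an elliptic automorphism.

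For uniqueness, if $\phi_1$ and $\phi_2$ both realize the assignment then the previous step makes $\phi_1$ an automorphism, so $\phi_1^{-1}\circ\phi_2$ is again a holomorphic self map of $\mathbb{D}$, and $(\phi_1^{-1}\circ\phi_2)(a_i)=\phi_1^{-1}(a_{i+1})=a_i$ for every $i$; applying the same Schwarz--Pick fact to its $n\ge 2$ distinct interior fixed points gives $\phi_1^{-1}\circ\phi_2=\mathrm{id}_{\mathbb{D}}$, i.e.\ $\phi_1=\phi_2$. The argument is essentially routine; the only place that reaches slightly past Section~2 is the sharp Schwarz--Pick statement needed when $n=2$ (two distinct interior fixed points imply the identity), which is entirely standard, and which can in any event be avoided by first establishing that $\phi$ is an automorphism and then invoking the classification of disk automorphisms (a non-identity automorphism fixes at most one point of $\mathbb{D}$). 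I do not expect any genuine obstacle beyond this bookkeeping.
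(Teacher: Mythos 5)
Your proof is correct and takes essentially the same route as the paper: iterate $\phi$ a total of $n$ times, use the rigidity fact that a holomorphic self map of $\mathbb{D}$ with at least two distinct interior fixed points is the identity to conclude $\phi^{[n]}=\mathrm{id}_{\mathbb{D}}$ and hence that $\phi$ is an automorphism, with uniqueness obtained from the same rigidity (the paper leaves this as ``easily verified''; your $\phi_1^{-1}\circ\phi_2$ argument is exactly the intended verification). The only cosmetic difference is the final step: the paper invokes the Denjoy--Wolff theorem to produce an interior fixed point and conclude ellipticity, while you rule out parabolic and hyperbolic automorphisms because they have infinite order; both are standard and interchangeable here.
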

\begin{proof}

Let  $\phi\in \mathcal{S}$ be such that $\phi(a_i)=a_{i+1}$ for all $1\leq i <n$ and $\phi(a_n)=a_1$.
Then $\phi^{[n]}(a_i)=a_i$, for all $1\leq i \leq n$ (Here $\phi^{[n]}$ denotes n times composition of
 the function $\phi$). Since $\phi^{[n]}$ have more than one fixed point, we have $\phi^{[n]}=I$, the
  identity map. That is, $\phi^{[n-1]}\circ \phi=\phi\circ\phi^{[n-1]}=I$ which implies the $\phi$ is
  an automorphism. 
  
  If such a map exists, the uniqueness can be verified easily.
  Also by Denjoy-Wolff theorem \cite[Section 5.1]{shapiro1}, $\phi$ has a unique
   fixed point inside $\mathbb{D}$. Hence $\phi$ is an elliptic automorphism.
\end{proof}
\begin{theorem}
Let $\theta $ be an inner function having only a finite number of zeros on $\mathbb{D}$. Then every
 nontrivial disk automorphism $\phi$ such that  $C_{\phi}(\theta H^p)\subseteq \theta H^p$ will be
 an elliptic automorphism.
\end{theorem}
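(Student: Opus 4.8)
The plan is to argue entirely through the zero set of $\theta$ and then invoke Proposition~\ref{autom}. Let $a_1,\dots,a_n$ be the distinct zeros of $\theta$ in $\mathbb{D}$; these are finite in number by hypothesis, and we take $n\ge 1$ (if $\theta$ is zero-free there is no zero set for this argument to exploit). By Remark~\ref{rem}, the assumption $C_\phi(\theta H^p)\subseteq\theta H^p$ forces $\phi$ to map $Z(\theta)=\{a_1,\dots,a_n\}$ into itself; since $\phi$ is a disk automorphism it is injective on $\mathbb{D}$, and therefore it permutes the finite set $\{a_1,\dots,a_n\}$.

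Now fix any $a\in\{a_1,\dots,a_n\}$. Its forward orbit $\{a,\phi(a),\phi^{[2]}(a),\dots\}$ stays inside the finite set $Z(\theta)$, so by injectivity of $\phi$ there is a least integer $r\ge 1$ with $\phi^{[r]}(a)=a$. Put $b_i=\phi^{[i-1]}(a)$ for $1\le i\le r$; by minimality of $r$ the $b_i$ are pairwise distinct, and $\phi(b_i)=b_{i+1}$ for $1\le i<r$ while $\phi(b_r)=b_1$. If $r\ge 2$, Proposition~\ref{autom} applies to the points $b_1,\dots,b_r$ and the map $\phi$, and forces $\phi$ to be an elliptic automorphism. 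If instead $r=1$, then $\phi(a)=a$, so $\phi$ is a disk automorphism with a fixed point inside $\mathbb{D}$, and, being nontrivial by hypothesis, it is an elliptic automorphism by definition. In either case $\phi$ is elliptic, which is the claim.

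I do not expect a genuine obstacle: the substance of the proof is already packaged into Proposition~\ref{autom} and into the reduction supplied by Remark~\ref{rem}. The only two points that want a little attention are the separate handling of a length-one orbit---where Proposition~\ref{autom} does not literally apply and one instead appeals to the definition of an elliptic automorphism---and the observation that the finiteness of $Z(\theta)$ is exactly what makes the $\phi$-orbit of a zero close up into a finite cycle; this is the sole role played by the hypothesis.
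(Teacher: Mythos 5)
Your proof is correct and follows the same route as the paper: Remark~\ref{rem} gives that $\phi$ permutes the finite zero set, and a cycle of length at least $2$ triggers Proposition~\ref{autom}. In fact your separate treatment of the case $r=1$ (a zero fixed by $\phi$, where a nontrivial automorphism with an interior fixed point is elliptic by definition) is slightly more careful than the paper's own argument, which asserts without justification that a cycle of length $k>1$ exists --- an assertion that fails, for instance, when $\theta$ has a single zero fixed by $\phi$.
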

\begin{proof}
Let $\theta $ be an inner function with zeros $ a_1,a_2,\ldots,a_n$ and let $\phi$ be a nontrivial
disk automorphism such that $C_{\phi}(\theta H^p)\subseteq \theta H^p$. By the Remark \ref{rem}, we see that
 $\phi$ is a bijection on the zero set $\{ a_1,a_2,\ldots,a_n\}$. Thus, there exists a subset
 $\{b_1,b_2,\ldots,b_k\}$ of the zeros with $k>1$, such that $\phi(b_i)=b_{i+1}$ for all
 $1\leq i <k$ and $\phi(b_k)=b_1$. By Proposition \ref{autom},  $\phi$ will be an elliptic automorphism.
\end{proof}
For $a,b\in \mathbb{D}$, consider the map $\phi_{a,b}=B_{a}\circ B_c\circ B_{a}$, where $c=B_{a}({b})$.
We can easily verify that the above map is an automorphism on $\mathbb{D}$ which interchanges
 $a$ and $b$. Hence the  Proposition \ref{autom} will help us to conclude the following
 result.
\begin{corollary}\label{inter}
Let $a,b\in \mathbb{D}$ and $\phi\in \mathcal{S}
$ such that $\phi(a)=b$ and $\phi(b)=a$. Then $\phi=\phi_{a,b}$.
\end{corollary}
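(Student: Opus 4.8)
The plan is to deduce this directly from the uniqueness assertion in Proposition~\ref{autom}, applied with $n=2$ to the ordered pair $(a_1,a_2)=(a,b)$. That proposition guarantees there is \emph{at most one} self-map of $\mathbb{D}$ carrying $a$ to $b$ and $b$ to $a$. Since $\phi_{a,b}=B_a\circ B_c\circ B_a$ (with $c=B_a(b)$) has already been noted to be an automorphism of $\mathbb{D}$ interchanging $a$ and $b$, any $\phi\in\mathcal{S}$ with $\phi(a)=b$ and $\phi(b)=a$ must equal $\phi_{a,b}$, which is exactly the claim.

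If a self-contained check of the interchange property is desired, I would first recall the involution identity $B_a\circ B_a=\mathrm{id}_{\mathbb{D}}$ valid for every $a\in\mathbb{D}$, together with the elementary evaluations $B_c(0)=c$ and $B_c(c)=0$. Then $\phi_{a,b}(a)=B_a\big(B_c(B_a(a))\big)=B_a\big(B_c(0)\big)=B_a(c)=B_a(B_a(b))=b$, and symmetrically $\phi_{a,b}(b)=B_a\big(B_c(B_a(b))\big)=B_a\big(B_c(c)\big)=B_a(0)=a$; moreover $\phi_{a,b}$ is a composition of disk automorphisms, hence an automorphism and in particular a member of $\mathcal{S}$.

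There is essentially no obstacle in this argument. The only point worth keeping in mind is that Proposition~\ref{autom} is being used in the regime $n=2$ with $a\neq b$ — the situation implicit in the word ``interchanges'' — so that $\phi^{[2]}$ fixes the two distinct points $a$ and $b$ and is therefore forced to be the identity; this is precisely what powers the uniqueness half of Proposition~\ref{autom}. Granting that, the corollary follows at once.
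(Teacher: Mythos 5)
Your argument is correct and is exactly the paper's: the text preceding the corollary notes that $\phi_{a,b}$ is an automorphism interchanging $a$ and $b$, and then invokes the uniqueness part of Proposition~\ref{autom} (with $n=2$) to conclude. Your explicit verification of the interchange property and your remark that $a\neq b$ is implicitly needed are both fine additions but not departures from the paper's route.
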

Now we will focus on the Blaschke product of the form $B=\gamma B_a^mB_{b}^n$. Without loss of
generality let $m\geq n$.
By Remark \ref{rem}, any $\phi\in \mathcal{L}_B$  maps $\{a,b\}$ into $\{a,b\}$. We will discuss
all possibilities case by case.

Case 1 ($\phi(a)=a$ and $\phi(b)=b$):  Since $\phi$ has  more than one fixed point, $\phi$ will
be the identity map.

Case 2 ($\phi(a)=b$ and $\phi(b)=a$):
By Corollary \ref{inter}, $\phi=\phi_{a,b}$. For $\phi\in \mathcal{L}_B$, we should have $m=n$
 by Theorem \ref{auto}.

Case 3 ($\phi(a)=\phi(b)=b$): By Theorem \ref{fix}, we have
$ \phi\in \mathcal{L}_B$ if and only if $\phi=B_{b}\circ (B_{a}^kB_{b}g)$ for some $g\in \mathcal{B}_1,$
where $k=\lceil\frac{m}{n}\rceil$.\\
Case 4 ($\phi(a)=\phi(b)=a$): By Theorem \ref{fix}, we have
$\phi\in \mathcal{L}_B$ if and only if $\phi=B_{a}\circ (B_{a}B_{b}g)$ for some $ g\in \mathcal{B}_1.$

Based on the above discussion, we have the following conclusions. Here $I$ denotes
the identity map.
\begin{theorem}
Let $B=\gamma(B_{a}B_{b})^n$ for some $n\in \mathbb{N}$. Then
$$\mathcal{L}_B= \cup_{g\in\mathcal{B}_1}\{B_{a}\circ( B_{a}B_{b}g),B_{b}\circ (B_{a}B_{b}g)\}\cup\{I,\phi_{a,b}\}.$$
\end{theorem}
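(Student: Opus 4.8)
The plan is to assemble the four cases for the action of $\phi$ on the zero set $\{a,b\}$ that are recorded just before the statement. Since $B=\gamma(B_{a}B_{b})^{n}=\gamma B_{a}^{n}B_{b}^{n}$, both $a$ and $b$ are zeros of $B$ with the \emph{same} multiplicity $n$. By Remark \ref{rem}, every $\phi\in\mathcal{L}_{B}$ carries $\{a,b\}$ into itself, so exactly one of the following occurs: (i) $\phi$ fixes both $a$ and $b$; (ii) $\phi$ interchanges $a$ and $b$; (iii) $\phi(a)=\phi(b)=b$; (iv) $\phi(a)=\phi(b)=a$. I would handle these one at a time and show that the elements of $\mathcal{L}_{B}$ they contribute are exactly $I$, $\phi_{a,b}$, $\{B_{b}\circ(B_{a}B_{b}g):g\in\mathcal{B}_{1}\}$ and $\{B_{a}\circ(B_{a}B_{b}g):g\in\mathcal{B}_{1}\}$, respectively; their union is the asserted set.

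In case (i), a self map of $\mathbb{D}$ with two fixed points is the identity (Schwarz lemma), and $I\in\mathcal{L}_{B}$ trivially. In case (ii), Corollary \ref{inter} forces $\phi=\phi_{a,b}$; conversely $\phi_{a,b}$ is an automorphism with $\phi_{a,b}(a)=b$ and $\phi_{a,b}(b)=a$, so $B_{b}\circ\phi_{a,b}$ and $B_{a}\circ\phi_{a,b}$ are automorphisms vanishing at $a$ and at $b$, hence unimodular constant multiples of $B_{a}$ and $B_{b}$ respectively; thus $(B\circ\phi_{a,b})/B$ is a unimodular constant, and Theorem \ref{main} yields $\phi_{a,b}\in\mathcal{L}_{B}$. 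This is precisely where the equality of the two multiplicities is used, and it explains why $\phi_{a,b}$ appears here but is excluded when the multiplicities differ (cf.\ Theorem \ref{auto}).

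For cases (iii) and (iv) I would invoke Theorem \ref{fix} with $\{a_{1},a_{2}\}=\{a,b\}$: the hypothesis that all $a_{i}$ map to a common $a_{j}$ holds, and since $m_{1}=m_{2}=n$ and the target multiplicity is also $n$, every exponent $k_{i}=\lceil m_{i}/m_{j}\rceil$ equals $1$. Theorem \ref{fix} then says $\phi\in\mathcal{L}_{B}$ iff $\phi=B_{b}\circ(g B_{a}B_{b})$ (target $b$) or $\phi=B_{a}\circ(g B_{a}B_{b})$ (target $a$) for some $g\in\mathcal{B}_{1}$; and conversely each such $\phi$ does send both $a$ and $b$ to the stated point, so these are exactly the members of $\mathcal{L}_{B}$ lying in cases (iii) and (iv). Taking the union over the four cases gives the formula. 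I do not expect a serious obstacle: the only points requiring care are the exhaustiveness of the case split (supplied by Remark \ref{rem}) and the verification that $\phi_{a,b}\in\mathcal{L}_{B}$, which rests on $m=n$.
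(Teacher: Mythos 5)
Your proposal is correct and follows essentially the same route as the paper: the case split on how $\phi$ acts on $\{a,b\}$ via Remark \ref{rem}, Corollary \ref{inter} for the interchange case, and Theorem \ref{fix} with $k_i=1$ for the two constant-on-zeros cases. Your explicit verification that $\phi_{a,b}\in\mathcal{L}_B$ when the multiplicities agree (via Theorem \ref{main}) is a small but welcome addition the paper leaves implicit.
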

\begin{theorem}
Let $B=\gamma B_{a}^mB_{b}^n$ with $m> n$. Then
$$\mathcal{L}_B= \cup_{g\in \mathcal{B}_1}\{B_{a}\circ (B_{a}B_{b}g),B_{b}\circ (B_{a}^kB_{b}g)\} \cup \{I\},$$
where $k=\lceil\frac{m}{n}\rceil$.
\end{theorem}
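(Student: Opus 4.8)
The plan is to assemble the final theorem directly from the case analysis carried out just above it, exactly as was done for the case $B=\gamma(B_aB_b)^n$. Let $B=\gamma B_a^mB_b^n$ with $m>n$, and let $\phi\in\mathcal{L}_B$. By Remark \ref{rem}, $\phi$ must map $\{a,b\}$ into $\{a,b\}$, so there are four mutually exhaustive cases, already labelled Case 1 through Case 4 in the discussion preceding the statement. First I would dispose of Case 1 ($\phi(a)=a$, $\phi(b)=b$): here $\phi$ has two fixed points, so $\phi=I$ by the Schwarz-lemma fact recorded in Section 2. Next, Case 2 ($\phi(a)=b$, $\phi(b)=a$): by Corollary \ref{inter} the only candidate is $\phi=\phi_{a,b}$, but $\phi_{a,b}$ is an automorphism, so Theorem \ref{auto} forces $m=n$, contradicting $m>n$; hence Case 2 contributes nothing to $\mathcal{L}_B$.

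The remaining two cases are where the two families in the statement come from. In Case 4 ($\phi(a)=\phi(b)=a$), Theorem \ref{fix} applies with $j$ the index of $a$: since $\mathrm{mult}_B(a)=m=\mathrm{mult}_B(\phi(a))$, the ceiling $\lceil m/m\rceil=1$, while $\lceil n/m\rceil=1$ as well because $n<m$; so $k_i=1$ for both zeros and $\phi\in\mathcal{L}_B$ iff $\phi=B_a\circ(B_aB_bg)$ for some $g\in\mathcal{B}_1$. In Case 3 ($\phi(a)=\phi(b)=b$), Theorem \ref{fix} applies with $j$ the index of $b$: now $\lceil m/n\rceil=k$ (using $m>n$, so $k\ge2$) and $\lceil n/n\rceil=1$, giving $\phi\in\mathcal{L}_B$ iff $\phi=B_b\circ(B_a^kB_bg)$ for some $g\in\mathcal{B}_1$. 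Taking the union of the contributions from Cases 1, 3, 4 (Case 2 being empty) yields
$$\mathcal{L}_B=\bigcup_{g\in\mathcal{B}_1}\{B_a\circ(B_aB_bg),\,B_b\circ(B_a^kB_bg)\}\cup\{I\},$$
with $k=\lceil m/n\rceil$, which is exactly the claimed formula.

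I do not expect a serious obstacle here, since every ingredient is already in place; the only point requiring a little care is bookkeeping the ceilings in the two invocations of Theorem \ref{fix} — making sure that in Case 4 both exponents collapse to $1$ (so that no derivative conditions on $\phi$ survive beyond $\phi=B_a\circ(B_aB_bg)$), and that in Case 3 the exponent of $B_a$ is genuinely $k=\lceil m/n\rceil$ and not something smaller. One should also note explicitly that the identity map $I$ arising in Case 1 is not already subsumed in either family (it is not, since a map of the form $B_a\circ(B_aB_bg)$ sends $b$ to $a\ne b$ and likewise $B_b\circ(B_a^kB_bg)$ sends $a$ to $b\ne a$), so the union is as written, and conversely every listed map does lie in $\mathcal{L}_B$ by the "if" directions of Theorem \ref{fix} and by $I$ being trivially in $\mathcal{L}_B$.
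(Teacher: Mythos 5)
Your proposal is correct and follows essentially the same route as the paper, which states this theorem as a direct consequence of the preceding Case 1--Case 4 discussion (Case 1 giving $I$, Case 2 ruled out by Theorem \ref{auto} since $m>n$, and Cases 3--4 handled by Theorem \ref{fix} with the ceilings $\lceil m/n\rceil=k$ and $\lceil n/m\rceil=1$ exactly as you compute). Your added remark that $I$ is not subsumed in either family is a small but accurate piece of bookkeeping the paper leaves implicit.
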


{\bf Acknowledgments.} The first author is supported by the University-JRF Scheme by Cochin University
 of Science and Technology. The third author is supported by the Teachers Association for Research
  Excellence (TAR/2022/000063) of SERB (Science and Engineering Research Board), India.
%\tableofcontents
\nocite{*} 
\bibliographystyle{amsplain}
\bibliography{database}
\end{document}